\documentclass[a4paper,11pt]{article}
\usepackage{color}
\usepackage{amsmath,amsfonts,amsthm, amssymb,xspace}
\parskip 2ex 
\parindent 0pt
\definecolor{blue}{rgb}{0.00,0.00,1.00}
\definecolor{red}{rgb}{1.00,0.00,0.00}

\newcommand{\Z}{{\mathbb Z}}
\newcommand{\N}{{\mathbb N}}

\newcommand{\F}{{\mathbb F}}
\newcommand{\cA}{{\cal A}}
\newcommand{\cB}{{\cal B}}
\newcommand{\cS}{{\cal S}}
\newcommand{\cL}{{\cal L}}
\newcommand{\ctL}{{\cal L}^0}
\newcommand{\cM}{{\cal M}}
\newcommand{\ctM}{{\cal M}^0}
\newcommand{\ainv}{{ \hbox{\sc a}}}
\newcommand{\binv}{{\hbox{\sc b}}}
\newcommand{\cinv}{{\hbox{\sc c}}}
\newcommand{\dinv}{{\hbox{\sc d}}}
\newcommand{\einv}{{\hbox{\sc e}}}
\newcommand{\sL}{{ \hbox{\sc l}}}
\newcommand{\sU}{{ \hbox{\sc u}}}
\newcommand{\cX}{{\cal X}}
\newcommand{\cY}{{\cal Y}}

\newcommand{\sys}{\rm{sys}}
\newcommand{\Res}{{\rm{Res}}}
\newcommand{\Cay}{{\cal G}}
\newcommand{\dist}{{\rm{dist}}}
\newcommand{\GAP}{{\sf GAP}}
\newcommand{\KBMAG}{{\sf KBMAG}}
\newtheorem{theorem}{Theorem}[section] 
\newtheorem{lemma}[theorem]{Lemma}     
\newtheorem{proposition}[theorem]{Proposition}
\newtheorem{corollary}[theorem]{Corollary}
\def\margin_comment#1{\marginpar{\sffamily{\small #1\par}\normalfont}}
\newenvironment{mylist}{\begin{list}{}{
\setlength{\parskip}{0mm}
\setlength{\topsep}{2mm}
\setlength{\parsep}{0mm}
\setlength{\itemsep}{0.5mm}
\setlength{\labelwidth}{7mm}
\setlength{\labelsep}{3mm}
\setlength{\itemindent}{0mm}
\setlength{\leftmargin}{12mm}
\setlength{\listparindent}{6mm}
}}{\end{list}}
\title{Biautomatic structures in systolic Artin groups}
\author{Derek F. Holt and Sarah Rees,\\
Mathematics Institute, University of Warwick, Coventry CV47AL, UK,\\
School of Mathematics, Statistics and Physics, University of Newcastle, Newcastle NE1 7RU, UK}
\date{6th July 2018}
\begin{document}
\maketitle

\begin{abstract}
We examine the construction of Huang and Osajda that was used in their proof of
the biautomaticity of Artin groups of almost large type. We describe a slightly
simpler variant of that biautomatic structure, with explicit descriptions
of a few small examples, and we examine some of the properties of the structure.
We explain how the construction can be programmed within the {\sf GAP system}.
\end{abstract}

Mathematics subject classification (2010): {20F36 (primary), 20F10, 20F65 (secondary)}

Keywords: Artin groups, word problem, automatic groups, biautomatic groups, systolic complexes, systolic groups.

\section{Introduction}
The work reported in this article was motivated by a recent article of
Huang and Osajda \cite{HuangOsajda} proving the biautomaticity of
a class of Artin groups, namely those of \emph{almost large type},
which contains all Artin groups of large type.
Previously biautomaticity
had been proved by Brady and McCammond for many, but not all,
Artin groups of large type, specifically for those of extra-large type \cite{Peifer},
all 3-generated examples, and all others for which the associated Coxeter
diagram can be oriented in such a way as to exclude certain oriented
subdiagrams \cite{BradyMcCammond}.
Biautomaticity had also been proved for all Artin groups of finite
type by Charney \cite{Charney, Charney2}; for these groups (which are Garside groups), symmetric, geodesic biautomatic
structures were found over the Garside generators.

We were particularly interested in the results of \cite{HuangOsajda} since
we already knew that all Artin
groups of almost large type 
(in fact all in the slightly larger class of sufficiently large type) had 
shortlex automatic structures over their standard generating sets 
\cite{SLartin,SLartin2}; but these structures are not in general biautomatic.
Our methods were combinatorial, based on rewriting.

The methods used in \cite{BradyMcCammond} and \cite{Charney,Charney2} 
to prove biautomaticity for those two types of Artin groups 
are quite distinct from each other.
Brady and McCammond's 
approach is geometric, relying on the construction of a piecewise 
Euclidean non-positively curved 2-complex on which the Artin group
acts discretely and fixed point freely, followed by the application of results
of Gersten and Short \cite{GerstenShort1,GerstenShort2}.
But Charney's approach uses the Garside structure of Artin groups of finite type.

Huang and Osajda's article \cite{HuangOsajda} proves biautomaticity of an Artin group 
of almost large type via the construction of a systolic complex on which the 
group acts simplicially, properly discontinuously and cocompactly;
hence by definition, the group is systolic. 
The biautomaticity of these Artin groups then follows immediately
from \cite[Theorem E]{JS}, that all systolic groups are biautomatic. 
We observe that, in fact, the universal covers of the 2-complexes of 
\cite{BradyMcCammond} are systolic, although the construction of those 2-complexes does 
not seem to follow the same pattern as that of the systolic complexes in
\cite{HuangOsajda}; hence in some sense the results of \cite{HuangOsajda}
generalise those of \cite{BradyMcCammond}.
Our aim in this work has been to examine the construction of \cite{HuangOsajda} and see
what we can learn from it. 

To a large extent our approach has been experimental,
through computation and examination of examples.
In particular, we have written programs in $\GAP$ that construct the biautomatic
structures arising from the systolic complexes that are acted on by
Artin groups of almost large type in which all edge labels in the
associated Coxeter diagram are either at most $4$ or equal to infinity.

In order to describe and analyse the construction of \cite{HuangOsajda} we need a
variety of background material on automatic and biautomatic groups, on Artin 
groups, and on systolic complexes, and we have tried to give sufficient detail to make this article accessible to a range of readers.

After this introductory section, Section~\ref{sec:gpthy_intro}
introduces the concepts and notations we shall use from group theory.
In Section~\ref{sec:intro_biauto_artin} we define 
automatic and biautomatic structure for groups, and introduce Artin groups,
while Section~\ref{sec:gp_actions} explains briefly how group 
actions may be used to build such structures. 

Section~\ref{sec:systolic} introduces the concepts and notation of systolic
complexes and systolic groups, and is based on \cite{JS}. 
The basic concepts are introduced in Section~\ref{sec:systolic_intro}, then Section
~\ref{sec:dirgeo} explains the concepts of directed and allowable geodesics 
that are vital to the construction of biautomatic
structures in systolic groups.
Section~\ref{sec:biaut} is devoted to a description of the construction of
biautomatic structures for systolic groups, again with reference to \cite{JS}.
Section~\ref{sec:gensets} describes 
generating sets $\cB$ and $\cA$ over which biautomatic structures will be
defined; descriptions of those biautomatic structures,
and of the automata that define them, are given in the subsequent sections.
In fact we introduce two slightly different biautomatic structures,
over the two alphabets $\cB$ and $\cA$. The language $\cL'$ over $\cB$
is the structure of \cite{JS}, but we prefer in this article to work
with the languages $\cL$ and $\ctL$, over $\cA$,
which are closely related to $\cL'$.

Huang and Osajda's construction of a systolic complex $\cX$ for an Artin group $G$ of almost large 
type is described briefly at the beginning of Section~\ref{sec:systolic_artin}.
The complex $\cX$ is formed out of the Cayley graph $\Cay$ for $G$ by
replacing subgraphs $\Cay_{ij}$, corresponding to 2-generator 
subgroups $G_{ij}$ of $G$, by systolic complexes $\cX_{ij}$, and the 
construction of those
subcomplexes is described in Section~\ref{sec:subcomplexes_Xij}.
We illustrate the construction of $\cX$ by computing some examples in Section~\ref{sec:Artin_examples}. We describe the complexes for the
right-angled Artin groups $\Z^2$, $\Z^2 * \Z$ and $\F_2 \times \Z$ in 
Sections~\ref{sec:dihartZ2},~\ref{sec:Z2freeZ},~\ref{sec:F2timesZ},
and the 3-string braid group $G(A_2)$ in Section~\ref{sec:dihartA2}.

Section~\ref{sec:properties} is devoted to the identification of some
properties found in the biautomatic structure of any Artin group of almost large type.
Here, we may assume that the Artin group is non-free; for a free group in its natural presentation,
the associated systolic complex is simply the Cayley graph over the standard
generating set.
Corollary~\ref{cor:cA_Garside} shows that, given a sensible selection of
the orbit representatives upon which the language $\ctL$ depends, the generating
set $\cA$ (as well as the related set $\cB$) consists precisely
of the union of inverse closed Garside generating sets for the 2-generated Artin subgroups $G_{ij}$ of $G$,
together with a symbol representing the identity element.
Subsequent results examine properties of the language $\ctL$ over $\cA$.
It is clear from our examples that $\ctL$ is not, in general,
geodesic over $\cA$.
But we prove in Proposition~\ref{prop:wordlength} that the $\cA$-length of a word in $\ctL$ is no greater than its geodesic length over the
standard Artin generating set $X$,
and no less than $1/\max\{2,M-2\}$ of that geodesic length, where $M$ is the maximum of all finite edge labels $m_{ij}$. 
We observe too, from our examples, that the language $\ctL$ is not in general
prefix closed, but from a result about extensions of directed geodesics 
that we prove in Proposition~\ref{prop:extend_dirgeo}
we can deduce 
Corollary~\ref{cor:prefix}, which associates to each prefix of a word 
in $\ctL$ a word in $\ctL$ differing from it by a bounded amount at its end.

Finally, Section~\ref{sec:computer_code} describes our development of algorithms
that construct systolic complexes and then biautomatic structures for those systolic Artin groups that we were
able to handle.

\section{Concepts and notation from group theory}
\label{sec:gpthy_intro}
\subsection{Introducing biautomatic groups and Artin groups}
\label{sec:intro_biauto_artin}
All the groups considered in this article will be finitely presented.
Let $G=\langle X \mid R \rangle$ be such a group. We define its {\em Cayley
graph} $\Cay=\Cay(G,X)$ to be the graph with vertex set $G$ and, for each
$x \in X$, directed edges labelled $x$ and $x^{-1}$ from $g$ to $gx$ and from
$gx$ to $g$, respectively. These two directed edges have the same
underlying undirected edge.

A finitely presented group $G=\langle X \mid R \rangle$
is {\em automatic} if there exists
\begin{mylist}
\item[(1)]  a regular set $L$ of words over $X$ 
providing a complete set of representatives of the elements of $G$,
and 
\item[(2)] an associated integer $k$,
\end{mylist}
satisfying the following condition:
\begin{quote}
whenever words $u,v$ in $L$ represent elements
$g,h \in G$ satisfying $ga=_G h$ with $a \in X \cup X^{-1} \cup \{1 \}$, 
and $\gamma_1(u),\gamma_1(v)$ are 
the paths in $\Cay(G,X)$ that are traced out by
$u,v$ from the identity vertex, 
then $\gamma_1(u)$ and $\gamma_1(v)$ fellow travel at distance $k$.
\end{quote}

By definition, a set of words is {\em regular} if it is the language of a finite state automaton.
We say that two paths in a graph {\em fellow travel} at distance $k$ if, for any $i$,
the distance in the graph between the vertices at distance $i$ from the initial
vertex of each path is at most $k$.

An automatic group $G$ is {\em biautomatic} if there exist $L,k$ as above satisfying
the following additional condition:
\begin{quote}
whenever words $u,v \in L$ represent elements
$g$ and $h \in G$ satisfying $ah=_G g$ with $a \in X \cup X^{-1}$, 
and $\gamma_1(u)$ and $\gamma_a(v)$
are the paths in $\Cay(G,X)$ that are traced out by
$u$ from the identity vertex and by $v$ from $a$, 
then $\gamma_1(u)$ and $\gamma_a(v)$ fellow travel at distance $k$.
\end{quote}
We refer to \cite{ECHLPT,HRRbook} for more detail.
But we note in particular that the properties of automaticity and biautomaticity are
independent of the choice of finite generating set; that is, if a group G has 
an automatic or biautomatic structure over some finite generating set $X$ 
then it has a corresponding structure over any other finite generating set.
We note also that any hyperbolic group is biautomatic;
over any generating set, the set of all geodesic words is the language of a
biautomatic structure \cite[Theorem 3.4.5]{ECHLPT}.

The language $L$ of an automatic structure for a group $G$ is called {\em symmetric} if whenever $w \in L$ then $w^{-1} \in L$;
when $L$ is symmetric then the 
structure satisfies both fellow traveller conditions and so $G$ is biautomatic.
$L$ is called {\em geodesic} if every word in $L$ is a minimal length 
representative of the group element it represents. $L$ is called {\em prefix closed} if whenever $w \in L$ then every prefix of $w$ is also in $L$. 

In this article we consider biautomatic structures for certain types
of  Artin groups.  The standard presentation for an Artin group over its
standard generating set $X =\{a_1,\ldots, a_n\}$ is as
\[ \langle a_1,\ldots ,a_n \mid {}_{m_{ij}}(a_i,a_j)=
{}_{m_{ji}}(a_j,a_i)\quad\hbox{\rm for each}\quad i \neq j \rangle, \]
where the integers $m_{ij}$ are the entries in a Coxeter matrix
(a symmetric $n \times n$ matrix $(m_{ij})$ with entries in
$\N \cup \{\infty\}$, where $m_{ii}=1$, and $m_{ij} \geq 2$, $\forall
i \neq j$), and where
for generators $a,a'$ and $m \in \N$ we define ${}_m(a,a')$ to be the
word that is a product of $m$ alternating $a$'s and $a'$'s and
starts with $a$.
Adding the relations $a_i^2=1$ to those for the Artin group defines the associated Coxeter group,
which is more commonly presented as
\[ \langle a_1,\ldots ,a_n \mid (a_ia_j)^{m_{ij}}=1\quad\hbox{\rm for each}\quad i,j \rangle. \]
The standard presentation of an Artin group can be described using an associated
{\em Coxeter diagram} $\Gamma$, with $n$ nodes, where nodes corresponding to $a_i$ 
and $a_j$ are joined by an edge labelled by $m_{ij}$.

An Artin group $G(\Gamma)$ is said to be of {\em spherical} or {\em finite} type if the associated
Coxeter group is finite, of {\em dihedral type} if the associated
Coxeter group is dihedral (or, equivalently, the standard generator set has two elements),
and of {\em large type} if $m_{ij} \geq 3$ for all $i \neq j$.
The group $G(\Gamma)$ is defined in
\cite{HuangOsajda} to have {\em almost large type} if 
(1) in any triangle of edges of $\Gamma$ either at least one edge is labelled $\infty$ or no edge has label 2, and 
(2) in any square of edges of $\Gamma$ either at least one edge is labelled $\infty$ or at most one edge has label 2. 
So, in particular, all Artin groups of large type are of almost large type.

Artin groups of various types are proved automatic \cite{Charney,Peifer,BradyMcCammond,VanWyk,HermillerMeier,SLartin,SLartin2}, but before the results of
\cite{HuangOsajda} biautomatic structures were only known to exist for
right angled Artin groups \cite{HermillerMeier}, those of finite type,
and many (but not all) those of large type. Automatic structures
for which the associated language $L$ is a set of shortlex geodesics over the
standard generating set were already known for all almost large Artin groups
and, slightly more generally, for all \emph{sufficiently large} Artin groups
\cite{SLartin2}).  

\subsection{Using group actions to derive automatic structures}
\label{sec:gp_actions}
We recall a basic tenet of geometric group theory
commonly referred to as the Milnor-\u{S}varc lemma:
if a group $G=G(X)$ has a `nice' (properly discontinuous and compact) discrete, 
isometric action on a metric space $\cX$,
then its Cayley graph $\Cay=\Cay(G,X)$ is quasi-isometric to $\cX$.
Specifically, for any vertex $x_0 \in \cX$, there is a quasi-isometry from $\Cay$ to $\cX$ that maps the identity vertex of $\Cay$ to $x_0$, and each vertex
$g$ of $\Cay$ to the image $gx_0$ in $\cX$ of $x_0$ under the action of $g$. 

This is at the basis of results such as those that follow, of Gersten and Short, and of Niblo and Reeves.

\begin{theorem}[Gersten\&Short, 1990,1991 \cite{GerstenShort1,GerstenShort2}]
The fundamental groups of piecewise Euclidean 2-complexes of types $A_1 \times A_1$, $A_2$, $B_2$ and $G_2$
(corresponding to tesselations of the Euclidean plane by squares, equilateral
triangles, and triangles with angles $(\pi/2,\pi/4,\pi/4)$, $(\pi/2,\pi/3,\pi/6)$) are
biautomatic.
\end{theorem}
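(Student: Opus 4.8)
Let $Y$ be such a complex and $G=\pi_1(Y)$. As in \cite{GerstenShort1,GerstenShort2}, the definition of a $2$-complex of one of these four types carries the requirement that it be non-positively curved, that is, that every vertex link satisfy the combinatorial girth condition appropriate to its geometry (girth at least $6$ in the $A_2$ case, at least $4$ in the square case, and so on). By Gromov's link condition together with the Cartan--Hadamard theorem the universal cover $\widetilde Y$ is then a complete CAT(0) space, and $G$ acts on it freely (as the group of deck transformations), properly discontinuously, cocompactly and by isometries. Choosing a finite generating set $X$ for $G$ coming from the edges of $Y$ (biautomaticity not depending on this choice) and a vertex $\tilde v$ of $\widetilde Y$, the orbit map $g\mapsto g\tilde v$ is an injective quasi-isometry from $\Cay(G,X)$ onto $G\tilde v\subseteq\widetilde Y^{(1)}$ --- this is the Milnor--\u{S}varc lemma recalled in Section~\ref{sec:gp_actions} --- and it identifies the word metric on $G$, up to a bi-Lipschitz equivalence whose constants depend only on the shapes of the four cells, with the restriction to $G\tilde v$ of both the edge-path metric on the $1$-skeleton $\widetilde Y^{(1)}$ and the CAT(0) metric $d$.

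The plan is to take $L$ to be the (symmetric) set of all words over $X\cup X^{-1}$ labelling geodesic edge-paths in $\widetilde Y^{(1)}$, which evaluates onto $G$. By the criteria recalled in Section~\ref{sec:intro_biauto_artin} and the standard machinery of \cite{ECHLPT} it then suffices to prove (a) that $L$ is regular, and (b) the geodesic fellow-traveller property: there is a constant $k$ such that any two geodesic edge-paths whose initial vertices are within distance $1$ and whose terminal vertices are within distance $1$ fellow travel at distance $k$. Taking one of these two distances to be $0$ recovers, in turn, the automatic fellow-traveller condition and the additional biautomatic condition of Section~\ref{sec:intro_biauto_artin}; and given (a) and (b), the routine passage from $L$ to a regular sublanguage of unique representatives (which synchronously fellow travel with all geodesics to the same element) preserves both conditions, so $G$ is biautomatic.

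Part (b) I would obtain from the CAT(0) geometry alone. Convexity of $d$ on $\widetilde Y$ bounds the Hausdorff distance between the CAT(0) segments $[p,q]$ and $[p',q']$ whenever $d(p,p')\le 1$ and $d(q,q')\le 1$; transporting this across the bi-Lipschitz comparison of $d$ with the edge-path metric, and using that an edge-geodesic in any of these four flat geometries stays within a bounded distance of the CAT(0) segment joining its endpoints, shows that any two such geodesic edge-paths remain uniformly close, after which a standard reparametrisation converts asynchronous closeness into the synchronous distance-$k$ condition. This step is routine once the structural input of part (a) is available.

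The substance of the proof, and the step I expect to be the main obstacle, is part (a): showing that geodesicity of an edge-path in $\widetilde Y^{(1)}$ is a bounded-memory condition, so that $L$ is accepted by a finite automaton whose states record a ball of bounded radius about the current vertex. The point is that $A_1\times A_1$, $A_2$, $B_2$ and $G_2$ are exactly the rank-$2$ Euclidean reflection geometries: by cocompactness there are only finitely many isometry types of vertex link, and the ways a geodesic can cross a single cell, or two cells sharing an edge, likewise form a finite list. The key lemma to establish is a local no-shortcut criterion --- an edge-path fails to be geodesic precisely when it contains a detour of bounded length that can be shortened by cutting a corner across one cell, or across an adjacent pair of cells --- and it is here that the special angles $\pi/2$, $\pi/3$, $(\pi/2,\pi/4,\pi/4)$ and $(\pi/2,\pi/3,\pi/6)$ enter; the criterion fails for general piecewise-Euclidean $2$-complexes, which is precisely why these four types must be singled out. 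Establishing this criterion (and, if one wishes to work with unique representatives directly, the analogous local description of a canonical choice of geodesic) is where the bulk of the work lies; with it in hand, $L$ is regular, and biautomaticity of $G$ --- which, being a property of the group, is independent of the presentation chosen --- follows as above.
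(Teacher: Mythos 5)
First, note that the paper itself gives no proof of this statement: it is quoted from Gersten and Short, and the only added content is the remark that \cite{GerstenShort2} states the result for automatic structures while its introduction makes clear that the construction is biautomatic. So your proposal has to be judged on its own terms, and it contains a genuine error in part (b).

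The step that fails is the claim that an edge-geodesic in the $1$-skeleton of $\widetilde Y$ stays within bounded distance of the CAT(0) segment joining its endpoints, and the consequence you draw from it, namely that the language $L$ of \emph{all} geodesic edge-words satisfies the fellow-traveller property. Already in the $A_1\times A_1$ case (take $Y$ a square torus, $G=\Z^2=\langle a,b\mid ab=ba\rangle$, $\widetilde Y$ the plane tiled by unit squares) the edge-path $a^nb^n$ from $(0,0)$ to $(n,n)$ is a $1$-skeleton geodesic whose midpoint $(n,0)$ lies at Euclidean distance $n/\sqrt2$ from the diagonal segment; and the two geodesics $a^nb^n$ and $b^na^n$ represent the same element but are at distance $2n$ from each other after $n$ steps, so $L$ does not fellow travel at any fixed $k$. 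The same phenomenon occurs in the flats of the $A_2$, $B_2$ and $G_2$ complexes. Convexity of the CAT(0) metric controls CAT(0) segments, not combinatorial geodesics, and no ``routine passage to a sublanguage'' can repair this: once the full geodesic language fails to fellow travel, the entire content of the theorem is in \emph{choosing} a regular sublanguage of canonical geodesic representatives for which both fellow-traveller conditions hold. That is exactly what Gersten and Short do, by a combinatorial analysis of disc diagrams and links (their non-positive curvature hypothesis is used combinatorially, via the girth condition on links, rather than through the CAT(0) metric), and it is why the result is restricted to these four cell shapes. In short, you have located the main difficulty in part (a) (regularity of all geodesics), whereas it actually lies in the construction and fellow-travelling of a suitable normal form; part (a) alone, even if established, does not yield biautomaticity.
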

This result is stated in terms of automatic rather than biautomatic structures in \cite[Theorem1]{GerstenShort2},
but it is made clear in the introduction of that paper that the construction gives biautomatic structures.

It follows from Gersten and Short's results that a group acting discretely and fixed point freely on 
a complex of any of the types listed must be biautomatic. 

\begin{theorem}[Niblo\&Reeves 1998, Theorem 5.3 of \cite{NibloReeves_biaut}]
If $\cX$ is a simply connected and non-positively curved cube complex, and $G$ acts
effectively, cellularly, properly discontinuously and cocompactly on $\cX$,
then $G$ is biautomatic.
\end{theorem}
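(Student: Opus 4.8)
The plan is to produce the biautomatic structure directly from the combinatorial geometry of the cube complex, in the spirit of the two preceding theorems: construct a canonical family of paths in the $1$-skeleton $\cX^{(1)}$ --- the \emph{normal cube paths} (due to Niblo and Reeves) --- one for each ordered pair of vertices, and then transport this family to a Cayley graph of $G$ via the action; the work splits into a geometric part, about $\cX$ alone, and a translation part using the hypotheses on the action. For the geometric part I would first recall the hyperplane (wall) structure of a $\mathrm{CAT}(0)$ cube complex --- which is what $\cX$ is, being simply connected and non-positively curved. Every edge determines a hyperplane; each hyperplane is convex and separates $\cX$ into two halfspaces; two distinct vertices $u,v$ are separated by a finite nonempty set $W(u,v)$ of hyperplanes with $|W(u,v)| = \dist(u,v)$ in $\cX^{(1)}$; finite families of halfspaces have the Helly property; and the link of each vertex is flag, so a set of edges at a vertex spans a cube exactly when it spans squares pairwise. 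I then define the normal cube path from $u$ to $v$ (for $u\neq v$) greedily: let $C_1$ be the maximal cube at $u$ all of whose dual hyperplanes lie in $W(u,v)$ --- well defined as a cube, and unique, by the flag condition applied to the edges at $u$ that cross hyperplanes of $W(u,v)$ --- let $u_1$ be the vertex of $C_1$ opposite $u$, and recurse from $u_1$ to $v$. Convexity of halfspaces gives $W(u_1,v)\subsetneq W(u,v)$, so the process stops after a finite sequence of cubes $C_1,\dots,C_n$ joining $u$ to $v$; this sequence crosses each hyperplane of $W(u,v)$ exactly once, so its underlying edge path is geodesic in $\cX^{(1)}$, and one checks it is the unique cube path satisfying the local \emph{normality} condition (no edge of $C_{i+1}$ at $u_i$ may be adjoined to $C_i$). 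The one substantial geometric ingredient needed about these paths is a \textbf{fellow-traveller estimate}: if $v'$ is a vertex adjacent to $v$ then, after reparametrising both by path length, the edge paths underlying the normal cube paths $u\to v$ and $u\to v'$ fellow travel at a distance bounded purely in terms of $\dim\cX$, and symmetrically when the initial vertex $u$ is moved.

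For the translation part I invoke the hypotheses on the action. Since $G$ acts properly discontinuously and cocompactly (and effectively) on $\cX$, the complex is locally finite and finite-dimensional, $G$ is finitely generated, and by the Milnor--\u{S}varc lemma recalled above the orbit map $g\mapsto gx_0$ from a fixed base vertex $x_0$ extends to a quasi-isometry $\Cay(G,X)\to\cX$, which we may take to land in $\cX^{(1)}$, for the finite generating set $X$ consisting of those $g$ that move $x_0$ at most a fixed bounded distance. I then encode the normal cube path from $x_0$ to $gx_0$ as a word $w_g$ over $X$, replacing each cube $C_i = [u_{i-1},u_i]$ by a fixed bounded-length word crossing it that depends only on the $G$-orbit of the local picture at $u_{i-1}$; this yields a set $L = \{ w_g : g\in G\}$ of unique representatives of the elements of $G$.

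It then remains to check that $(L,k)$ is a biautomatic structure. Regularity of $L$: by cocompactness there are only finitely many $G$-orbits of the bounded local data (a vertex together with the residual pattern of separating hyperplanes) on which each greedy step depends, so a finite-state automaton whose states record this data accepts exactly the words $w_g$. The two fellow-traveller conditions: right-multiplying $g$ by $a\in X\cup X^{-1}\cup\{1\}$ moves the target vertex $gx_0$ a bounded distance, so by the geometric fellow-traveller estimate --- transported across the uniform quasi-isometry $\Cay(G,X)\to\cX^{(1)}$ --- the words $w_g$ and $w_{ga}$ fellow travel in $\Cay(G,X)$; left-multiplying by $a$ applies the isometry $a$ to the whole configuration and moves the initial vertex $x_0$ a bounded distance, which gives the second (left-sided) condition. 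Taking $k$ to be the resulting uniform constant finishes the proof.

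The step I expect to be the main obstacle is the geometric fellow-traveller estimate for normal cube paths: controlling how the greedily chosen maximal cubes $C_i$ change when an endpoint is perturbed by one edge. Normality is not stable under reversing a path, and a small change to the separating-hyperplane set near the target can, a priori, cascade backwards along the cube decomposition. The resolution uses convexity of halfspaces together with the Helly property (equivalently the median structure on the vertex set): one shows that $W(u,v)$ and $W(u,v')$ differ by at most one hyperplane, that this forces the two normal cube paths to coincide outside a bounded block of consecutive cubes, and that $\dim\cX$ bounds the dimensions of the cubes in that block, which together yield the required uniform synchronous bound. Granted this estimate, termination, uniqueness, regularity and the transfer to $G$ are routine, using only local finiteness and cocompactness.
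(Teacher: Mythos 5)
This theorem is quoted in the paper from Niblo and Reeves without proof, so there is no internal argument to compare against; what you have written is a reconstruction of the original Niblo--Reeves proof, and its overall architecture is the correct one: normal cube paths defined greedily via the hyperplanes separating the two endpoints, uniqueness and geodesity from the wall-counting formula $|W(u,v)|=\dist_{\cX_1}(u,v)$, regularity from the finitely many $G$-orbits of local data, and transfer of the fellow-traveller property to $\Cay(G,X)$ via the orbit quasi-isometry. Two small omissions in the geometric part are worth flagging: to know that the edges at $u$ dual to hyperplanes of $W(u,v)$ span a cube you need first that they pairwise span squares (this uses that two such hyperplanes either cross or would contradict one of them being crossed by an edge at $u$), and only then does the flag condition on links upgrade squares to a cube; and uniqueness of the normal cube path satisfying the local normality condition is a separate induction, not immediate from the greedy construction.

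The genuine gap is in your proposed resolution of what you correctly identify as the main obstacle. You claim that since $W(u,v)$ and $W(u,v')$ differ by one hyperplane when $v,v'$ are adjacent, the two normal cube paths from $u$ must \emph{coincide outside a bounded block of consecutive cubes}. This is false, and fellow travelling cannot be derived that way. In the standard squaring of the plane by $\Z^2$, the normal cube path from $(0,0)$ to $(n,0)$ is the sequence of $n$ horizontal edges along $y=0$, while the normal cube path from $(0,0)$ to $(n,1)$ begins with the full square on $[0,1]^2$ (both of its hyperplanes separate the endpoints, so the greedy rule takes the $2$-cube) and then runs along $y=1$; the two paths share no cube at all, and the discrepancy propagates the entire length of the path. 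What is true, and what Niblo and Reeves actually prove, is the weaker synchronous statement: the $i$-th vertices $u_i$ and $u_i'$ of the two normal cube paths remain within a uniformly bounded distance of one another for all $i$, with a bound depending only on $\dim\cX$. The proof is an induction along the paths using the median/Helly structure to compare the greedy cubes chosen at nearby basepoints, not a localisation of the difference to a terminal block. As written, your argument for the key estimate does not go through; with the corrected statement and its inductive proof substituted in, the rest of your outline is sound.
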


A cube complex is a metric polyhedral complex in which each cell is 
isometric to a Euclidean cube and the glueing maps are isometries. 
It is proved in \cite{NibloReeves_coxeter} that any
finite rank Coxeter group $G$ acts properly discontinuously by isometries 
on an appropriate cube complex; in the many cases where the action is cocompact, the Coxeter group
is proved biautomatic.

The results of \cite{GerstenShort1,GerstenShort2} are used in the proof of the
following result.

\begin{theorem}[Brady\&McCammond, 2000 \cite{BradyMcCammond}]
Various Artin groups of large type, including all that are 3-generated,
act appropriately on
piecewise Euclidean non-positively curved
(i.e. locally CAT(0)) 
2-complexes of types $A_2$ or $B_2$, and hence are biautomatic.
\end{theorem}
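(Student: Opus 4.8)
The plan is to build, for each Artin group $G(\Gamma)$ of the relevant kind, a piecewise Euclidean $2$-complex $\cX$ carrying a cocompact, properly discontinuous, fixed-point-free action of $G$, and then to invoke the Gersten--Short theorem quoted above (in the form: a group acting discretely and fixed point freely on a complex of type $A_2$ or $B_2$ is biautomatic). The natural starting point is the presentation $2$-complex $K$ of the standard Artin presentation: one vertex, one edge for each generator $a_i$, and one $2$-cell for each defining relator ${}_{m_{ij}}(a_i,a_j)={}_{m_{ji}}(a_j,a_i)$, realised as a $2m_{ij}$-gon. The universal cover $\widetilde K$ then automatically carries a free, cocompact, properly discontinuous $G$-action, so the whole problem becomes local: one must put a piecewise Euclidean metric on the cells of $K$ (pulled back to $\widetilde K$) making $\widetilde K$ into a CAT(0) space. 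By the Cartan--Hadamard theorem for piecewise Euclidean complexes it is enough to make $K$ locally CAT(0), and by Gromov's link condition this in turn reduces to a statement about a single finite graph: the link of the (unique) vertex of $K$, with each corner of each polygon carrying a length equal to its interior angle, must contain no embedded geodesic loop of length less than $2\pi$.

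The substance of the proof is therefore the choice of the polygons' shapes together with the verification of this link condition. To land in type $A_2$ or $B_2$ one must subdivide each $2m_{ij}$-gon into Euclidean triangles drawn from the $A_2$ tessellation (equilateral triangles) or the $B_2$ tessellation (triangles with angles $(\pi/2,\pi/4,\pi/4)$), which constrains the available corner angles to a small finite set of values. The hypothesis $m_{ij}\ge 3$ (large type) is what gives each polygon enough perimeter to admit such a subdivision with the angles one needs. Having chosen the subdivisions, one computes the resulting vertex link: its vertices are the (subdivided) edge-ends at the vertex, and its metric edges, of length the corresponding corner angle, come from the polygon corners. One then has to show that every embedded cycle in this graph has total length at least $2\pi$. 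In the $3$-generated case this is a finite, genuinely case-by-case verification over the admissible label triples $(m_{12},m_{13},m_{23})$; in the larger families it is the orientation hypothesis on $\Gamma$ (that $\Gamma$ can be oriented so as to avoid the forbidden oriented subdiagrams) that permits a globally consistent assignment of angles across $2$-cells sharing an edge, which is exactly what is needed to kill every short loop in the link.

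The main obstacle I expect is precisely this link computation: controlling \emph{all} embedded loops of the vertex link at once. The dangerous ones are the very short loops --- of link-length corresponding to two or three polygon corners wrapping partway around an edge --- and forbidding them forces a system of inequalities on the chosen corner angles, inequalities that couple the different $2$-cells incident to a common edge. The real work is the bookkeeping: deciding how to subdivide each $2m_{ij}$-gon, how to orient $\Gamma$, and how to distribute the angles so that every one of these inequalities holds simultaneously; the large-type and orientation conditions are exactly the hypotheses that make this possible, and examples outside those conditions (where it fails) explain why the theorem is not stated for all large-type Artin groups. Once the angle assignment is fixed and the link condition checked, non-positive curvature of $\widetilde K$ follows from the link condition together with Cartan--Hadamard, the $G$-action is manifestly proper, cocompact and fixed-point-free because it is the deck-transformation action on the universal cover of a compact complex, and biautomaticity of $G$ then follows immediately from the Gersten--Short result for complexes of type $A_2$ and $B_2$.
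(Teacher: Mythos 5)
This theorem is quoted from Brady--McCammond and the paper offers no proof beyond the citation, but it does record the key feature of their construction: the $2$-complexes are presentation complexes of \emph{non-standard} presentations of the Artin groups in which \emph{all relators have length $3$}, so that the complexes are already built out of triangles before any metric is assigned. Your proposal instead starts from the standard presentation complex $K$, with one $2m_{ij}$-gon per relation, and hopes to subdivide each polygon into $A_2$- or $B_2$-triangles and then verify the link condition at the unique vertex. That first step already fails. Take the dihedral case $m=3$ with boundary word $aba b^{-1}a^{-1}b^{-1}$: reading the six corners of the hexagon, the pair of directions $\{a^+,b^-\}$ occurs at two distinct corners, as does $\{a^-,b^+\}$, so the vertex link contains two bigons. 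The link condition forces each bigon to have length at least $2\pi$, hence those four corners alone must carry total angle at least $4\pi$; but a Euclidean (or non-positively curved piecewise Euclidean) hexagon has total corner angle at most $4\pi$, leaving nothing for the remaining two corners. Subdividing the polygon does not help: the total angle contributed at the vertex along each original corner is unchanged and the bigons merely become subdivided cycles of the same length. So no choice of angles on the standard presentation complex can satisfy Gromov's link condition, and the "bookkeeping" you defer to is not merely hard but impossible in this setting.

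The missing idea is precisely the change of presentation. Brady and McCammond enlarge the generating set of each dihedral parabolic (essentially to the Garside-type generators appearing elsewhere in this paper) so that every defining relator becomes a triangle $xy=z$; the resulting presentation complex is simplicial, each $2$-cell is assigned the shape of an $A_2$ or $B_2$ triangle, and the link condition is then checked at the single vertex of this \emph{new} complex --- this is where the large-type hypothesis and, for more than three generators, the orientability condition on the Coxeter diagram actually enter. Your general framework (universal cover, Cartan--Hadamard, Gromov link condition, then Gersten--Short) is the right one, but applied to the wrong complex; pushed consistently, "subdividing the $2m_{ij}$-gon by diagonals through the vertex" is exactly a rewriting of the relator over new generators, i.e.\ the non-standard triangular presentation, and the proof cannot be carried out without making that change explicit.
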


The 2-complexes constructed in the proof of this theorem are
simplicial,
made by attaching angles and lengths to  presentation complexes for 
non-standard presentations for the Artin groups, in which all relators have 
length 3.
It can be seen that their universal covers are systolic \cite{JS}, 
as defined below.

\section{Systolic complexes}
\label{sec:systolic}
\subsection{Basic concepts}
\label{sec:systolic_intro}
We need the definition of a systolic complex, and related notation,
which we shall use throughout this article.
In this we generally follow \cite{JS}.
Suppose that $\cX$ is a simplicial complex, and $\cS=\cS(\cX)$ its set of
simplices (the vertices of its `barycentric subdivision').
We denote by $\cX_1$
the 1-skeleton of $\cX$, 
that is, the graph on its vertices whose edges are the 1-cells.
The set $\cS$ itself has the structure of a simplicial complex, whose 
simplices are sets of simplices of $\cX$ that can be ordered into an ascending
chain of simplices that are related by inclusion; in particular
its set $E(\cS)$ of  1-cells (edges) is the set of pairs
$\{\rho,\sigma\}$ of distinct simplices of $\cX$ for
which either $\sigma \subset \rho$ or $\rho \subset \sigma$.
We denote the 1-skeleton of $\cS$ by $\cS_1$.

We define a {\em cycle} in $\cX$ to be a 1-dimensional subcomplex of $\cX$
consisting of the vertices and edges of a closed path in $\cX_1$
in which all edges are distinct and only the first and last vertices coincide;
the length of the cycle is defined to be the number of 1-cells in it.
We call a  cycle a {\em full cycle} if it is
full as a subcomplex, that is, if any simplex spanned by 
a subset of its vertices is within the cycle. This means that no two 
non-adjacent vertices within the cycle are joined by an edge (or rather, 
there are no chords between non-adjacent vertices of the cycle).
Note that, since a cycle is 1-dimensional, a full cycle in a simply
connected complex cannot have length 3.
We define the {\em systole} of $\cX$, $\sys(\cX)$, to be the minimum length of
a full cycle.

We write $\cX_\sigma$ for the {\em link} in $\cX$ of a simplex $\sigma$ 
(the set of simplices $\tau$ such that $\tau$ is disjoint from $\sigma$, and 
such that $\tau$ and $\sigma$
span a simplex $\sigma*\tau$ of $\cX$), and for any subcomplex $\cY$ of $\cX$ 
containing $\sigma$, we write $\Res(\sigma,\cY)$ for the {\em residue} of 
$\sigma$ in $\cY$ (the union of all closed simplices that contain
$\sigma$; this is sometimes called the 
{\em closed star} of $\sigma$ in $\cY$).
We define the $1$-ball $B_1(\sigma,\cY)$ of $\sigma$ in $\cY$
to be the union of all closed simplices within $\cY$ that intersect 
$\sigma$ non-trivially.


We say that $\cX$ is {\em $\kappa$-large} if $\sys(\cX) \geq \kappa$ and also 
$\sys(\cX_\sigma)\geq \kappa$ for every simplex $\sigma$, {\em locally $\kappa$-large}
if the residue of every simplex of $\cX$ is $\kappa$-large, {\em $\kappa$-systolic}
if it is connected, simply connected and locally $\kappa$-large.
We call a group $\kappa$-systolic if it acts properly discontinuously
and cocompactly by automorphisms on a $\kappa$-systolic complex. 
(Proper discontinuity means that stablisers are finite.)
We abbreviate $6$-systolic as {\em systolic}.
By \cite{JS} $\kappa$-systolic complexes with $\kappa\geq 6$ are $\kappa$-large
(but this isn't true for $\kappa=4,5$). And (locally) $\kappa$-large implies
(locally) $m$-large for $\kappa \geq m$.

We note the following results from \cite{JS}
\begin{theorem}[Januszkiewicz\&Swiatkowski, Theorem A \cite{JS}]
The 1-skeleton of a 
$7$-systolic  complex is hyperbolic; any $7$-systolic group is hyperbolic.
\end{theorem}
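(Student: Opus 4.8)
The plan is to establish a linear isoperimetric inequality for $\cX$ and then to invoke the standard fact that a simply connected combinatorial $2$-complex whose $2$-cells have uniformly bounded size and which satisfies a linear isoperimetric inequality has Gromov-hyperbolic $1$-skeleton; since all $2$-cells of $\cX$ are triangles, the bounded-size hypothesis is automatic. Concretely: as $\cX$ is connected and simply connected, every edge loop $\gamma$ in $\cX_1$ bounds a singular combinatorial disk diagram $f\colon D\to\cX$ with $D$ a triangulated disk, and among all such diagrams for a fixed $\gamma$ we choose one, $D$, with the fewest triangles. Writing $n$ for the length of $\gamma$, the goal is to show that $D$ has $O(n)$ triangles.

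The crucial input is a local analysis of this minimal diagram. By a sequence of elementary diagram surgeries --- removing spurs, splitting $D$ at a cut vertex, removing a pair of folded triangles, and, whenever the $f$-image of the link of an interior vertex of $D$ has a chord, pushing $D$ across that chord --- one shows that in a minimal $D$ the map $f$ carries the link of every interior vertex $v$ injectively onto a full cycle of $\cX_{f(v)}$; otherwise one of these moves would yield a diagram for $\gamma$ with strictly fewer triangles. Since $\cX$ is $7$-systolic, hence $7$-large, each vertex link of $\cX$ has systole at least $7$, so this full cycle, whose length is exactly the number of triangles of $D$ incident to $v$, has length at least $7$. Establishing this dichotomy --- that a genuinely minimal diagram contains none of the offending configurations --- is the technical heart of the argument and the step I expect to be the main obstacle; it is also exactly where the hypothesis ``$7$'' rather than ``$6$'' is used, since $6$-largeness forces only ``at least $6$ triangles'' per interior vertex, yielding non-positively but not negatively curved behaviour.

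Granting this, one applies combinatorial Gauss--Bonnet to $D$, assigning the angle $\pi/3$ to every corner of every triangle. The curvature at an interior vertex meeting $t\ge 7$ triangles is $2\pi-t\pi/3\le-\pi/3$, while the curvature at each boundary vertex is at most $\pi$; since the total curvature equals $2\pi\chi(D)=2\pi$ and $D$ has at most $n$ boundary vertices, this forces $D$ to have at most $3(n-2)$ interior vertices. A routine Euler-characteristic count for a triangulated disk then bounds the number of triangles of $D$ by a linear function of $n$. That is the required linear isoperimetric inequality, so $\cX_1$ is hyperbolic.

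For the statement about groups: a $7$-systolic group $G$ acts properly discontinuously --- in particular with finite stabilisers --- and cocompactly by simplicial automorphisms on some $7$-systolic complex $\cX$, hence on the connected geodesic space $\cX_1$. By the Milnor--\v{S}varc lemma $G$ is finitely generated and quasi-isometric to $\cX_1$, and since hyperbolicity of geodesic metric spaces is a quasi-isometry invariant, $G$ is hyperbolic.
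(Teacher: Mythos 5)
The paper does not prove this theorem at all: it is quoted verbatim as Theorem A of Januszkiewicz and \'Swi\k{a}tkowski \cite{JS}, so there is no internal proof to compare against. Your sketch is, in substance, the argument of \cite{JS} itself: fill a cycle by a simplicial disk diagram of minimal area, show that minimality forces the link of each interior vertex to map onto a full cycle of the corresponding vertex link of $\cX$ (hence of length at least $7$ by $7$-largeness), apply combinatorial Gauss--Bonnet with angles $\pi/3$ to get a linear isoperimetric inequality, and pass to the group via Milnor--\v{S}varc. Your arithmetic is right ($V_i\le 3(n-2)$ and then $T=2V_i+n-2=O(n)$ by the Euler count), and the reduction of the group statement to hyperbolicity of $\cX_1$ is correct since $\cX_1$ with the path metric is geodesic and the action is proper and cocompact.

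The one genuine gap is the one you flag yourself: the assertion that a minimal diagram has no spurs, cut vertices, folded triangles, or chorded link images is precisely the technical content of the lemma on minimal fillings in \cite{JS}, and you have named the surgeries without verifying that each strictly decreases the number of triangles (pushing across a chord, in particular, requires an argument that the two arcs of the link cut off by the chord can be refilled more efficiently, and the case where an interior vertex's link maps onto a $3$-cycle must be handled separately via flagness, since a $3$-cycle is never full). Until that lemma is supplied the proof is incomplete at its central step, but the strategy is the standard and correct one, and everything downstream of the lemma is sound. One further point worth making explicit: the passage from a linear isoperimetric inequality for simplicial fillings to hyperbolicity of $\cX_1$ should be quoted carefully, since $7$-systolic complexes need not be locally finite; the combinatorial criterion you invoke does hold in that generality because it only refers to the filling function, but it is not the off-the-shelf statement for proper geodesic spaces.
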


\begin{theorem}[Januszkiewicz\&Swiatkowski, Theorem E \cite{JS}]
Any systolic group is biautomatic.
\end{theorem}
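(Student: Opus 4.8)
The plan is to build the biautomatic structure from the combinatorics of a systolic complex $\cX$ on which $G$ acts properly discontinuously and cocompactly, and then to transport it to $G$ itself. One cannot simply invoke a quasi-isometry here, since biautomaticity is not a quasi-isometry invariant; but it \emph{is} independent of the choice of finite generating set, so it suffices to produce one finite generating set of $G$ together with a biautomatic language over it. Fix a vertex $v_0$ of $\cX$; by the Milnor--\u{S}varc lemma the orbit map $g \mapsto gv_0$ is a quasi-isometry from $\Cay(G,X)$ to $\cX_1$. I would take as generating set the set $\cA$ (as introduced in Section~\ref{sec:gensets}) of nontrivial $g \in G$ with $gv_0$ lying in a bounded neighbourhood of $v_0$, say in the residue or the $1$-ball of a simplex containing $v_0$: cocompactness makes $\cA$ finite, and connectedness of $\cX_1$ together with cocompactness makes it generating.

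The core of the argument is the theory of directed (and allowable) geodesics in $\cX$. For an ordered pair of vertices I would construct a canonical geodesic by iterating a local projection step: from the current vertex, the link structure --- tightly controlled because $\cX$ is locally $6$-large, so every link has systole at least $6$ --- singles out, up to the choice of an initial simplex, the simplex through which the path should next pass, and hence the next vertex. Using $\kappa$-largeness with $\kappa \geq 6$ I would verify that these paths really are geodesics, that each step is determined by bounded local data, and that the sequence of combinatorial types recorded along such a path is accepted by a finite-state automaton, finiteness coming from the fact that the cocompact action leaves only finitely many orbit-types of local configurations. Reading the directed geodesics from $v_0$ to $gv_0$ as products of generators in $\cA$ then produces the regular language $\ctL$ (closely related to the language $\cL'$ of \cite{JS}) mapping onto $G$.

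It then remains to check the two fellow-traveller conditions, and here I would exploit the defining geometric feature of systolic complexes: $1$-balls behave convexly and the complex is locally $6$-large, which forces two directed geodesics whose endpoints are correspondingly at distance at most $1$ to remain within a uniformly bounded Hausdorff distance of one another, and, since directed geodesics are parametrised by arc length, to fellow travel essentially synchronously. Applying this one statement to the directed geodesics attached to $g$ and $h$ when these differ by a generator on the right, and then when they differ by a generator on the left, yields both the automatic and the biautomatic fellow-traveller conditions, so that the language $\ctL$ (and likewise $\cL$, and the language $\cL'$ of \cite{JS}) is the language of a biautomatic structure for $G$.

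I expect the main obstacle to be exactly this fellow-traveller property for directed geodesics, together with the convexity and local-largeness estimates in systolic complexes that are needed to make directed geodesics well defined and genuinely geodesic; the automaton bookkeeping and the reduction to a single convenient generating set are comparatively routine once those estimates are available. A secondary technical point is the translation between paths in $\cX_1$, whose intermediate vertices need not lie in the orbit $Gv_0$, and words over the finite set $\cA \subseteq G$: one must shadow each directed geodesic by an orbit path, chosen consistently enough that the resulting words are over $\cA$ and that the fellow-traveller estimates survive the passage.
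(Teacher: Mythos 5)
Your proposal is correct and follows essentially the same route as the paper's treatment (which is itself an exposition of the construction in \cite{JS}): directed geodesics from $v_0$ to $gv_0$ characterised by the local residue/$1$-ball condition, a finite alphabet obtained from labels of incident or spanning pairs of simplices, regularity coming from the finitely many $G$-orbits of such local configurations, and the fellow-traveller conditions inherited from the bounded fellow travelling of allowable geodesics (\cite[Prop.\ 11.2]{JS}) via the orbit map. The points you flag as the main obstacles are exactly the ones the paper also delegates to \cite{JS}, and your remaining caveats (e.g.\ shadowing directed geodesics by orbit paths, the role of $G_{v_0}$ in the first letter of words of $\cL$ versus $\ctL$) match the paper's handling.
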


In order to prove a group $G$  biautomatic, we need to define a regular language
$L$ of words (over some selected generating set) that maps onto the group 
and has appropriate fellow travelling properties.  
When $G$ acts appropriately on a systolic complex $\cX$, we
fix a vertex (0-cell) $x_0$ of $\cX$.
Then for each 
element $g \in G$  we need to select a path (or paths) within the complex $\cX$ from $x_0$ 
to $gx_0$, and then associate with each such path a word over 
the selected generated set 
that represents $g$.
It turns out that directed geodesics between pairs of vertices $x_0$, $gx_0$ have some very good
properties; we use the Milnor-\u{S}varc lemma to translate those properties into properties for the words in our selected language.

\subsection{Directed and allowable geodesics}
\label{sec:dirgeo}
Suppose that $\cX$ is a locally 6-large (but not necessarily systolic) 
simplicial complex.
We call a sequence of simplices $\sigma_0,\ldots,\sigma_n$ from $\cS(\cX)$ 
a {\em directed geodesic} from $\sigma_0$ to $\sigma_n$ if
\begin{mylist}
\item[(1)] for all $i$ with $0 \leq i \leq n-1$, $\sigma_i$ and $\sigma_{i+1}$ are disjoint and span a simplex $\sigma_i*\sigma_{i+1}$ of $\cX$, and
\item[(2)] for all $i$ with $0 \leq i \leq n-2$, $\Res(\sigma_i, \cX_{\sigma_{i+1}}) \cap B_1(\sigma_{i+2},\cX_{\sigma_{i+1}}) = \emptyset$.
\end{mylist}

We note that\\ 

\begin{lemma}
\label{lem:subpath_dir_geo}
\begin{mylist}
\item[(1)] Every subpath of a directed geodesic is a directed geodesic,
\item[(2)] If $\sigma_0,\ldots,\sigma_n$ is a directed geodesic, and $v$ is a vertex of the simplex $\sigma_n$, then $\sigma_0,\ldots,\sigma_{n-1},v$ is a directed geodesic.
\end{mylist}
\end{lemma}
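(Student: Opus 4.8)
The plan is to verify both statements directly against the two defining conditions of a directed geodesic, since each is essentially a local condition on consecutive (or triple-consecutive) terms of the sequence.

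For part (1), suppose $\sigma_0,\ldots,\sigma_n$ is a directed geodesic and consider a subpath $\sigma_j,\ldots,\sigma_k$ with $0 \le j \le k \le n$. Condition (1) of the definition concerns only pairs $\sigma_i,\sigma_{i+1}$ with $j \le i \le k-1$, and condition (2) concerns only triples $\sigma_i,\sigma_{i+1},\sigma_{i+2}$ with $j \le i \le k-2$; in both cases the relevant indices form a subset of $\{0,\ldots,n-1\}$ and $\{0,\ldots,n-2\}$ respectively, so the required relations are inherited verbatim from the original directed geodesic. First I would remark that it suffices to treat the two cases of deleting the first term and deleting the last term, and then iterate; but the observation above shows the general subpath case needs no induction at all.

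For part (2), we must check that replacing $\sigma_n$ by a vertex $v \in \sigma_n$ still gives a directed geodesic $\sigma_0,\ldots,\sigma_{n-1},v$. Here the new pairs and triples to verify are those ending in $v$. For condition (1) we need $\sigma_{n-1}$ and $v$ to be disjoint and to span a simplex of $\cX$: since $v$ is a vertex of $\sigma_n$ and $\sigma_{n-1}$ is disjoint from $\sigma_n$, we have $v \notin \sigma_{n-1}$, and since $\sigma_{n-1} * \sigma_n$ is a simplex of $\cX$, so is its face $\sigma_{n-1} * v$. For condition (2) we need, for $i = n-2$, that $\Res(\sigma_{n-2}, \cX_v) \cap B_1(\sigma_{n-1}, \cX_v) = \emptyset$; here I would compare this with the known relation $\Res(\sigma_{n-2}, \cX_{\sigma_{n-1}}) \cap B_1(\sigma_n, \cX_{\sigma_{n-1}}) = \emptyset$ from the original directed geodesic, swapping the roles so that the link is now taken with respect to $\sigma_{n-1}$ rather than $v$.

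The main obstacle I expect is precisely this last point: relating the intersection condition taken in the link $\cX_v$ to one taken in the link $\cX_{\sigma_{n-1}}$, since the definition of directed geodesic is not manifestly symmetric in the way it references $\sigma_{i+1}$. I would resolve it by unwinding the definitions of link, residue and $1$-ball: a simplex $\tau$ lies in $\cX_v$ iff $\tau$ is disjoint from $v$ and $v * \tau$ is a simplex of $\cX$, and similarly for $\cX_{\sigma_{n-1}}$; membership of $\Res(\sigma_{n-2}, \cX_v)$ and of $B_1(\sigma_{n-1}, \cX_v)$ then translates, after adjoining $v$, into statements about simplices of $\cX$ containing $\sigma_{n-2}$ (respectively meeting $\sigma_{n-1}$) together with $v$ and some face of $\sigma_{n-1}$. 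Since $v \subseteq \sigma_n$ and $\sigma_{n-1} * \sigma_n \in \cX$, these can be compared with the corresponding statements in $\cX_{\sigma_{n-1}}$ involving $\sigma_n$; a simplex witnessing a nonempty intersection in $\cX_v$ would, after spanning with $\sigma_n$, witness one in $\cX_{\sigma_{n-1}}$, contradicting the original condition (2). I would write this out carefully as it is the only step that is not purely formal.
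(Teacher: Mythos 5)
Part (1) is fine and is essentially the paper's argument: both defining conditions are local to consecutive pairs and triples, so they pass to subsequences.

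Part (2) contains a genuine error: you have misstated the condition that needs to be verified. In the definition of a directed geodesic, condition (2) reads $\Res(\sigma_i, \cX_{\sigma_{i+1}}) \cap B_1(\sigma_{i+2},\cX_{\sigma_{i+1}}) = \emptyset$, i.e.\ the link is always taken at the \emph{middle} simplex of the triple. For the new final triple $(\sigma_{n-2},\sigma_{n-1},v)$ the condition to check is therefore $\Res(\sigma_{n-2},\cX_{\sigma_{n-1}}) \cap B_1(v,\cX_{\sigma_{n-1}}) = \emptyset$, not the condition $\Res(\sigma_{n-2},\cX_v)\cap B_1(\sigma_{n-1},\cX_v)=\emptyset$ that you wrote down (note that $\sigma_{n-2}$ need not even lie in $\cX_v$, since $\sigma_{n-2}$ and $\sigma_n$ are generally at distance $2$, so your version of the condition does not parse as intended). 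Because of this misreading, your entire final paragraph is spent trying to resolve a difficulty -- relating $\cX_v$ to $\cX_{\sigma_{n-1}}$ -- that does not arise, and the sketch you give there is left vague and is not carried through. Meanwhile the step that actually completes the proof is never stated: since $v$ is a vertex of $\sigma_n$, every simplex of $\cX_{\sigma_{n-1}}$ meeting $v$ also meets $\sigma_n$, so $B_1(v,\cX_{\sigma_{n-1}})\subseteq B_1(\sigma_n,\cX_{\sigma_{n-1}})$, and the required intersection is contained in the one already known to be empty. This one-line containment is exactly how the paper finishes; your verification of condition (1) for the pair $(\sigma_{n-1},v)$ is correct as written.
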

\begin{proof}
It follows straight from the definition that a sequence of simplices 
is a directed geodesic if and only if the same is true of every subsequence of length at most 3, and so (1) is immediate.

(2) is immediate once we observe that if $\sigma_n \in \cX_{\sigma_{n-1}}$ and
$v$ is a  vertex of $\sigma_n$, then 
$B_1(v,\cX_{\sigma_{n-1}})\subseteq B_1(\sigma_n,\cX_{\sigma_{n-1}})$.
\end{proof} 
Note that the reverse of a directed geodesic need not be a directed geodesic.

If $\cX$ is systolic, then for any pair of vertices $v,w$ of $\cX$,
there is a unique directed geodesic  $\gamma(v,w)=\sigma_0,\sigma_1,\ldots,\sigma_n$ 
from $v=\sigma_0$ to $w=\sigma_n$ \cite[Lemma 9.7]{JS},
and it is geodesic, that is $d_{\cX_1}(v,w)=n$ \cite[Corollary 9.8]{JS}.
That directed geodesic is the {\em projection ray} from $v$ to $w$ (as defined in \cite{JS});
it is proved in \cite[Lemma 9.3]{JS} that every projection ray in a systolic 
complex is a directed geodesic, and in \cite[Proposition 9.6]{JS} that every directed geodesic $\sigma_0,\ldots,\sigma_n$ is a projection
ray on its final simplex $\sigma_n$.

Given vertices $v,w$, we call an {\em allowable geodesic} \cite[Section 11]{JS}
an infinite sequence
of vertices  
$u_0,u_1,\ldots$ such that if $\sigma_0,\sigma_1,\ldots,\sigma_n$ is the
directed geodesic from $v=\sigma_0$ to $w=\sigma_n$, then $u_0=v,u_i=w$ for $i \geq n$, and $u_i \in \sigma_i$ for $0<i<n$.
Such a sequence $u_0,u_1,\ldots,u_n$ is a geodesic path in the 1-skeleton 
of $\cX$ \cite[Fact 11.1]{JS}.
It is proved in \cite[Prop 11.2]{JS} that a pair of allowable geodesics joining
(respectively) vertex $v$ to vertex $w$ and vertex $p$ to vertex $q$ must
fellow travel at distance at most
$3 \max\{\dist_{\cX_1}(v,p),\dist_{\cX_1}(w,q)\}+1$.

From a directed geodesic $\gamma=\gamma(v,w)$ as above, we can define an associated
{\em polygonal path} 
\[\hat{\gamma}= \hat{\gamma}(v,w)=\sigma_0,\sigma_0*\sigma_1,\sigma_1,\sigma_1*\sigma_2,\sigma_2, \cdots,\sigma_{n-1},\sigma_{n-1}*\sigma_n,\sigma_n,\]
and note that any two adjacent simplices in a polygonal path are related by (alternately) inclusion or reverse inclusion.
So a polygonal path is a path within the graph $\cS_1$, 
whereas consecutive simplices on a directed 
geodesic  are at distance 2 as vertices of $\cS_1$.

\section{Biautomatic structures for $G$}
\label{sec:biaut}
\subsection{Alternate generating sets $\cB$ and $\cA$ for $G$}
\label{sec:gensets}
Now suppose that $G$ is a systolic group associated with the systolic complex $\cX$.
Let $K$ be the set of orbits of the action of $G$ on $\cS(\cX)$,
and let $V_0$ be a set of representatives of the orbits.
Choose $v_0 \in V_0$ to be a 0-cell. The 0-cell $v_0$ is a natural 
choice for $x_0$, which we use as we apply the Milnor-\u{S}varc lemma to embed 
$\Cay$ in $\cX$ as described in Section~\ref{sec:gp_actions}.

For $\sigma \in \cS(\cX)$,
let $\bar{\sigma}$ denote its representative in $V_0$. For 
$\sigma \in \cS(\cX)$, define $\Lambda_\sigma:= \{ g \in G: \sigma=g\bar{\sigma}\}$, the `set of {\em labels} of $\sigma$';
note that, since we have $v_0 \in V_0$, we have $\Lambda_{v_0} = G_{v_0}$.
Then for 
any pair $(\sigma,\tau)$ of simplices from $\cS(\cX)$, define $\Lambda_{\sigma,\tau}:= \Lambda_\sigma^{-1}\Lambda_\tau$, the `set of labels of $(\sigma,\tau)$';
note that if stabilisers are all trivial, then the sets $\Lambda_\sigma$ and $\Lambda_{\sigma,\tau}$ are all singletons. 
Note also that, for any $g \in G$, $\Lambda_{g\sigma,g\tau}=\Lambda_{\sigma,\tau}$, that $G_{v_0}\Lambda_{v_0,\sigma}=\Lambda_{v_0,\sigma}$,
and that for all $\rho,\sigma,\tau$, 
$\Lambda_{\rho,\sigma}\Lambda_{\sigma,\tau}=\Lambda_{\rho,\tau}$
We define an alphabet $\cB$, as in \cite[Lemma 14.3]{JS},
to be a (finite) set of symbols representing the union of $G_{v_0}$ and all the sets $\Lambda_{\sigma,\tau}$
for which $\sigma$ and $\tau$ are simplices related by inclusion or reverse inclusion.
Note that $\cB$ includes a symbol representing the identity element $1$.

We also define an alphabet
$\cA$,  a set of symbols representing the union of $G_{v_0}$
with all the sets $\Lambda_{\rho,\tau}$ 
for which $\rho,\tau$ are disjoint simplices that together span a simplex.

We will define biautomatic structures for $G$ over each of the two alphabets.
The first, which we call $\cL'$, over the alphabet $\cB$, and associated with polygonal paths, is the structure described in \cite[Page 49]{JS}.
We prefer the second, $\cL$,  over the alphabet $\cA$, and associated with 
directed geodesics. It is closely associated with $\cL'$, but the words in it 
(over $\cA$) are generally shorter than those in $\cL'$ (over $\cB$).
When the subgroup $G_{v_0}$ is trivial, we can simplify $\cL$ further
by deleting the first symbol in every word;
we shall call that simplified language $\ctL$.

\subsection{A structure over $\cB$}
\label{sec:biautB}

The following biautomatic structure over $\cB$ is (essentially) described both
in \cite[Pages 49-51]{JS} and in
\cite{Swiatkowski}. We have made some small adjustments after
noticing some differences between the two descriptions; we believe that there is an error in the
suggestion in \cite{JS} that any occurrences of the symbol $1$ in a word should
be deleted from it before it is included in the language; for
the language of words shortened
in this way would not satisfy the fellow traveller conditions.

Given $g \in G$,
let $\gamma= \gamma(v_0,gv_0)$
be the unique directed geodesic from $v_0$ to $gv_0$, and 
$\hat{\gamma}=\hat{\gamma}(v_0,gv_0)$ the corresponding polygonal path.
Write $\gamma=\sigma_0=v_0,\ldots,\sigma_i,\ldots,\sigma_n=gv_0$, $\hat{\gamma}=\sigma'_0=v_0,\ldots,\sigma'_j,\ldots,\sigma'_{2n}=gv_0$ (so that 
$\sigma'_{2i}=\sigma_i$).
Following \cite{JS}, we associate to $\hat{\gamma}$ 
all sequences of elements $g_j \in G$
($j=0,\ldots,2n$) with $g_j \in \Lambda_{\sigma'_j}$ for each $j$
( so $g_0 \in G_{v_0}$) and $g_{2n}=g$,
and then 
all words $g_0\nu_1\cdots \nu_{2n}$ over $\cB$, where
 $\nu_j \in \Lambda_{\sigma'_{j-1},\sigma'_j}$ ($j=1,\ldots,2n$) is defined by
$\nu_j := g_{j-1}^{-1}g_j$.

We see that the word $g_0\nu_1\cdots \nu_{2n}$
represents the element 
$g$, and that its proper prefixes represent $g_0,g_1,\ldots,g_{2n-1}$. 
We define $\cL'$ to be the set of all such words $g_0\nu_1\cdots \nu_{2n}$. 
If all stabilisers are trivial then $\cL'$ contains a unique representative of each element of $G$, but otherwise some elements will admit more than one representative.

That $\cL'$ defines a biautomatic structure for $G$ is proved in \cite{JS}.
Regularity of $\cL'$ is verified by explicit construction of an automaton that
recognises $\cL'$. Fellow travelling of appropriate pairs of words in $\cL'$ 
is inherited via the Milnor-\u{S}varc lemma from the fellow travelling of 
related pairs of allowable geodesics.

\subsection{A structure over $\cA$}
\label{sec:biautA}

We can form a biautomatic structure for $G$ over the alphabet $\cA$, directly
from the directed geodesics, as follows.

Given $g \in G$,
again let $\gamma= \gamma(v_0,gv_0)=\sigma_0=v_0,\sigma_1,\ldots,\sigma_n=gv_0$
be the unique directed geodesic from $v_0$ to $gv_0$. 
Now, associate to $\gamma$ 
all sequences of elements $h_i$
($i=0,\ldots,n$) with $h_i \in \Lambda_{\sigma_i}$ for each $i$ (so $h_0 \in G_{v_0}$), and $h_n=g$, and then all words
$h_0\mu_1\cdots \mu_{n}$ over $\cA$, where $\mu_i \in \Lambda_{\sigma_{i-1},\sigma_i}$ ($i=1,\ldots,n$) is defined by
$\mu_i := h_{i-1}^{-1}h_i$; 
we define the language $\cL$ over $\cA$ to be the set of all such words.

Again, we see that the word $h_0\mu_1\cdots \mu_n$
represents the element 
$g$, and that its proper prefixes represent $h_0,h_1,\ldots,h_{n-1}$. 
Clearly, each of the generators $\mu_j$ is equal in $G$ to a word of length 2 over $\cB$, and so there is a natural map from $\cL$ to $\cL'$, in which generators of $\cA$ are replaced by words of length 2 over $\cB$.

Because of the relationship between $\cL'$ and $\cL$, it is clear that
$\cL$ is also a biautomatic structure.

In addition,
 we note that the word $w=h_0\mu_1\cdots\mu_n$ representing $g$ has length 
$n+1$, while the directed geodesic $\gamma(v_0,gv_0)$ has length $n$, the same 
as the length of an allowable geodesic from $v_0$ to $gv_0$. 

\subsection{Constructing the automaton that recognises $\cL'$}
\label{sec:construct1}
The description of the automaton $\cM'$ that recognises $\cL'$ is taken from
\cite[Page 50]{JS}, with a small amount of adjustment.

We need the notation introduced in Section~\ref{sec:gensets}.
Recall that $K$ denotes the set of orbits  $G\sigma$
of the simplices $\sigma$ in the set $\cS=\cS(\cX)$, under the action of $G$. 
Recall also that $\cS$ itself has the structure of a simplicial complex.
We denote by $E(K)$ the set of orbits of $G$ on the set $E(\cS)$ of
edges of $\cS$;  its elements are sets 
$G\{\rho,\sigma\} := \{\{g\rho,g\sigma\}: g \in G \}$,
for which $\{\rho,\sigma\}\in E(\cS)$\}.
Now, as above let $\bar{\rho}, \bar{\sigma}$ be the representatives in the set
$V_0$ of the orbits
$G\rho, G\sigma$, and choose $\lambda_\rho \in \Lambda_\rho, \lambda_\sigma \in \Lambda_\sigma$; so $\rho = \lambda_\rho \bar{\rho}, 
\sigma = \lambda_\sigma\bar{\sigma}$. Then, although it is not necessarily true that 
$\{\bar{\rho},\bar{\sigma}\} \in E(\cS)$, we have $\{\lambda_\rho \bar{\rho},\lambda_\sigma\bar{\sigma}\} \in E(\cS)$, and it follows that
$\{\lambda^{-1}\bar{\rho},\bar{\sigma}\},\{\bar{\rho},\lambda \bar{\sigma}\} \in E(\cS)$ for all $\lambda \in \Lambda_{\rho,\sigma}$.

In the automaton $\cM'$, we 
have a unique start state, which we label $v_0$,
and for each $h \in G_{v_0}$ we have a state $(v_0,h)$.
In addition, for each element $G\{\rho,\sigma\}$ of $E(K)$,
and each $\lambda \in \Lambda_{\rho,\sigma}$, we define states
$(G(\rho,\sigma),\lambda)$.
For ease of notation, we refer to the state
$(G(\rho,\sigma),\lambda)$ as $(\rho,\sigma,\lambda)$,
but of course, for any
$g \in G$, the triple $(g\rho,g\sigma,\lambda)$ represents that same state.
The accept states are $(v_0,1)$ and all states of the form $(\sigma,v_0,\lambda)=(g\sigma,gv_0,\lambda)$.
The arrows are as follows:
\begin{mylist}
\item[(1)] an arrow labelled $h$ from $v_0$ to $(v_0,h)$, for each $h \in G_{v_0}=\Lambda_{v_0}$,
\item[(2)] for each $\sigma \in \cS$ with $v_0 \subset \sigma$,
and each $\lambda \in \Lambda_{v_0,\sigma}$,
an arrow labelled $\lambda$ from $(v_0,h)$ to the state
$(v_0,\sigma,\lambda)$, 
\item[(3)] an arrow labelled $\mu$ from state $(\rho,\sigma,\lambda)$ to state $(\sigma,\tau,\mu)$,
provided that:
\begin{mylist}
\item[either (i)] $\lambda^{-1}\bar{\rho}$ and $\mu \bar{\tau}$ are disjoint and span $\bar{\sigma}$,
\item[or (ii)] $\bar{\sigma}$ is a proper face of both $\lambda^{-1}\bar{\rho}$ and $\mu \bar{\tau}$ and\\
$\Res((\lambda^{-1}\bar{\rho})_{\bar{\sigma}},\cX_{\bar{\sigma}}) \cap 
B_1((\mu\bar{\tau})_{\bar{\sigma}},\cX_{\bar{\sigma}}) = \emptyset.$
\end{mylist}
\end{mylist}

We note that all paths from the start state to accept states have odd length,
alternating after the start between states $(\rho,\sigma,\lambda)$ with $\rho$
contained in $\sigma$ and those with $\sigma$ contained in $\rho$.
We note that all paths of length 1 from the start state lead to states of the 
form $(v_0,h)$, all paths of odd length greater than 1 from the start state
lead to states of the form
$(\rho,\sigma,\lambda)$ with $\rho \supset \sigma$,
and all paths of even length greater than 0 from the start state lead to states
of the form
$(\rho,\sigma,\lambda)$ with $\rho \subset \sigma$.

\subsection{Constructing the automaton that recognises $\cL$}
\label{sec:construct2}
We modify the construction of $\cM'$ to construct an automaton $\cM$ that recognises $\cL$; we use just the start state of $\cM'$ and those states at odd distance from it in $\cM'$.
We label the unique start state $v_0$,
and have accepting states $(v_0,h)$, for each $h \in G_{v_0}$, as before.
Then for each element $G\{\rho,\sigma\}$ of $E(K)$ with $\rho \supset \sigma$,
and each $\lambda \in \Lambda_{\rho,\sigma}$, we define a state
$(\rho,\sigma,\lambda)$. 
As above, if $g \in G$, the triples $(\rho,\sigma,\lambda)$ and $(g\rho,g\sigma,\lambda)$ represent the same state.
As above, the accepting states are
$(v_0,1)$ and all states of the form $(\sigma,v_0,\lambda)=(g\sigma,gv_0,\lambda)$.
The arrows are as follows:
\begin{mylist}
\item[(1)] an arrow labelled $h$ from $v_0$ to $(v_0,h)$, for each $h \in G_{v_0}=\Lambda_{v_0}$,
\item[(2)] for each $\rho \in \cS$ with $v_0 \subset \rho$,
and $\lambda \in \Lambda_{v_0,\rho}$, where $\sigma \subset \rho$
and $\mu \in \Lambda_{\rho,\sigma}$, an edge labelled $\lambda\mu$ from 
$(v_0,h)$ to the state $(\rho,\sigma,\mu)$,
provided that $\lambda^{-1}v_0$ and $\mu\bar{\sigma}$ are disjoint and span
$\bar{\rho}$,
\item[(3)] an arrow labelled $\mu\nu$ 
from state $(\rho,\sigma,\lambda)$ to state $(\tau,\upsilon,\nu)$, 
where $\sigma\subset \tau$ and $\mu \in \Lambda_{\sigma,\tau}$,
provided that:
\begin{mylist}
\item[(i)] the simplices $\mu^{-1}\bar{\sigma}$ and $\nu \bar{\upsilon}$ are
disjoint and span $\bar{\tau}$;
\item[(ii)] the simplex $\bar{\sigma}$ is a proper face of both
$\lambda^{-1}\bar{\rho}$ and $\mu \bar{\tau}$; and
\item[(iii)] $\Res((\lambda^{-1}\bar{\rho})_{\bar{\sigma}},\cX_{\bar{\sigma}}) \cap 
B_1((\mu\bar{\tau})_{\bar{\sigma}},\cX_{\bar{\sigma}}) = \emptyset.$
\end{mylist}
\end{mylist}

We observe that, if the vertex stabilisers are trivial then the first letter 
in every element of $\cL$ is simply the letter $1$, representing
 the identity element. In this case we might prefer to delete it;
we shall denote by $\ctL$  the language we get from $\cL$
by this simple adjustment. Clearly the new language also defines a 
biautomatic structure. 
It is elementary to modify the description  of $\cL$ above,
to find an automaton that recognises $\ctL$, as follows:
we no longer need the state labelled $(v_0,1)$, or the type 1 transitions
from $v_0$ to $(v_0,1)$, the state $v_0$ becomes an accepting state,
and we replace the type 2 transitions by transitions on the same
letter, and with the same target, but with source $v_0$ rather than $(v_0,1)$.

\section{Building systolic complexes and biautomatic structures for Artin groups}
\label{sec:systolic_artin}
\subsection{The basic construction}
Let $G$ be an Artin group of almost large type, defined  in its natural 
presentation over the standard generating set $X$. 
The systolic complex $\cX$ of \cite{HuangOsajda} is built
out of the Cayley graph $\Cay$ for $G$ over $\cX$; the vertex $v_0$ is set to be the vertex of $\Cay$ that corresponds to the identity element.
For each (parabolic) subgroup $G_{ij} = \langle x_i,x_j \rangle$ of $G$ with $i \ne j$
(which is itself a 2-generated Artin group) and $m_{ij} \ne \infty$,
we build a systolic complex $\cX_{ij}$ out of the Cayley graph $\Cay_{ij}$ for
$G_{ij}$. This involves adjoining new vertices and edges and also
new higher dimensional simplices to $\Cay_{ij}$.
So $\Cay_{ij}$ embeds in $\cX_{ij}$, and the action of $G_{ij}$ 
on $\cX_{ij}$ is inherited from its action on $\Cay_{ij}$.
We describe this construction in more detail for the cases $m_{ij}=2,3$ in Sections~
\ref{sec:dihartZ2} and ~\ref{sec:dihartA2} below.

For each vertex $v$ of $\Cay$, there is a subgraph of $\Cay$
isomorphic to $\Cay_{ij}$ with base point $v$.  We construct the systolic
complex $\cX$ for $G$ by replacing each of these subgraphs for which $m_{ij}<\infty$
by the corresponding systolic complex $\cX_{ij}$. When we make this replacement
we leave unchanged those vertices and edges that already belong to  
$\Cay$. So, for each $v$ and each such pair $i,j$, we are adjoining a
collection of new vertices, edges and higher dimensional simplices to $\cX$.
The action of $G$ on $\Cay$ extends naturally to an action on $\cX$.

Since $G$ acts fixed point freely on $\Cay$, in this action of $G$ on 
the systolic complex $\cX$
all simplex stablilisers are trivial. So the subgroup $G_{v_0}$ is the identity
subgroup, and the sets $\Lambda_\sigma$ and $\Lambda_{\sigma,\rho}$ are all
singletons; we shall denote the unique element of $\Lambda_\sigma$ by
$\lambda_\sigma$, 
the unique element of $\Lambda_{\sigma,\tau}$ by $\lambda_{\sigma,\tau}$.

\subsection{The modification of $\Cay_{ij}$ that produces $\cX_{ij}$}
\label{sec:subcomplexes_Xij}
The process that transforms the Cayley graph $\Cay_{ij}$ for the 2-generator 
Artin group $G_{ij}$ into a systolic complex $\cX_{ij}$, when $m_{ij}< \infty$, is described in some 
detail in \cite[Section 3]{HuangOsajda}. We note only a few points here
that we need in order to identify features of the biautomatic structures.

Under the action of $G_{ij}$, the Cayley graph $\Cay_{ij}$ contains vertices
in a single orbit, and there are two orbits on undirected edges. These
correspond to the directed edges labelled $a_i^{\pm 1}$ and those labelled
$a_j^{\pm 1}$.

A circuit of length $2m_{ij}$ labelled by the word
that is the concatenation of $a_ia_j\cdots$ and the inverse of $a_ja_j\cdots$ 
(both of length $m_{ij}$) starts at each vertex; such a circuit is called a
{\em precell} in \cite{HuangOsajda}. 
We call the two vertices on the boundary of a precell $\Pi$  that are (respectively) the sources 
and targets of two directed 
edges (labelled $x_i$ and $x_j$) its {\em initial} and {\em terminal vertices},
and we call the two portions of the boundary that consist of paths running
from the 
initial vertex to the terminal vertex its two {\em half boundaries}. 
We note that if two precells have intersecting boundaries, then that
intersection consists either of a single vertex,
which is the initial vertex of one and the terminal vertex of the other,
or of a path that is a proper prefix of a half boundary of one and a proper
suffix of a half boundary of the other.

The first step of the process that transforms 
$\Cay_{ij}$ to $\cX_{ij}$ is the triangulation of each such precell by the 
addition of $m_{ij}-2$ new vertices, $5m_{ij}-9$ new edges, and $4m_{ij}-6$ new 
2-cells, as shown in Figure~\ref{fig:triangulation}.
In \cite{HuangOsajda} the new vertices are called {\em interior vertices},
and the vertices inherited from $\Cay_{ij}$ {\em real vertices};
we shall use the same convention here.
In fact we shall call both vertices and edges created during the triangulation 
of a precell $\Pi$
{\em interior vertices and edges of $\Pi$}.

\begin{figure}
\setlength{\unitlength}{0.8pt}
\begin{picture}(300,70)(-60,-40)
\put(-80,-5){initial vertex}
\put(0,0){\circle*{3}} 
\put(240,0){\circle*{3}}
\put(245,-5){terminal vertex}

\put(40,0){\circle*{3}} \put(80,0){\circle*{3}}
\put(85,0){\circle*{1}} \put(90,0){\circle*{1}} \put(95,0){\circle*{1}} 
\put(100,0){\circle*{1}} \put(105,0){\circle*{1}} \put(110,0){\circle*{1}}
\put(115,0){\circle*{1}} \put(120,0){\circle*{1}} \put(125,0){\circle*{1}} 
\put(130,0){\circle*{1}} \put(135,0){\circle*{1}} \put(140,0){\circle*{1}}
\put(145,0){\circle*{1}} \put(150,0){\circle*{1}} \put(155,0){\circle*{1}}
\put(160,0){\circle*{3}} \put(200,0){\circle*{3}} 

\put(0,0){\line(1,0){40}} \put(40,0){\line(1,0){40}}
\put(160,0){\line(1,0){40}} \put(200,0){\line(1,0){40}}

\put(80,35){half boundary}
\put(20,30){\circle*{3}} \put(60,30){\circle*{3}} \put(100,30){\circle*{3}}
\put(105,30){\circle*{1}} \put(110,30){\circle*{1}} \put(115,30){\circle*{1}} 
\put(120,30){\circle*{1}} \put(125,30){\circle*{1}} \put(130,30){\circle*{1}} 
\put(135,30){\circle*{1}}  
\put(140,30){\circle*{3}} \put(180,30){\circle*{3}} \put(220,30){\circle*{3}}

\put(0,0){\line(2,3){20}} \put(10,15){\vector(2,3){0}} 
\put(20,30){\line(1,0){40}} \put(60,30){\line(1,0){40}}
\put(40,30){\vector(1,0){0}} \put(45,30){\vector(1,0){0}} \put(80,30){\vector(1,0){0}}
\put(140,30){\line(1,0){40}} \put(180,30){\line(1,0){40}}
\put(160,30){\vector(1,0){0}} \put(165,30){\vector(1,0){0}} \put(200,30){\vector(1,0){0}}
\put(240,0){\line(-2,3){20}} \put(230,15){\vector(2,-3){0}} \put(234,9){\vector(2,-3){0}}

\put(40,0){\line(-2,3){20}} \put(40,0){\line(2,3){20}}
\put(80,0){\line(-2,3){20}} \put(80,0){\line(2,3){20}}
\put(160,0){\line(-2,3){20}} \put(160,0){\line(2,3){20}}
\put(200,0){\line(-2,3){20}} \put(200,0){\line(2,3){20}}

\put(80,-45){half boundary}
\put(20,-30){\circle*{3}} \put(60,-30){\circle*{3}} \put(100,-30){\circle*{3}}
\put(105,-30){\circle*{1}} \put(110,-30){\circle*{1}} \put(115,-30){\circle*{1}} 
\put(120,-30){\circle*{1}} \put(125,-30){\circle*{1}} \put(130,-30){\circle*{1}} 
\put(135,-30){\circle*{1}}  
\put(140,-30){\circle*{3}} \put(180,-30){\circle*{3}} \put(220,-30){\circle*{3}}

\put(0,0){\line(2,-3){20}} \put(10,-15){\vector(2,-3){0}} \put(14,-21){\vector(2,-3){0}}
\put(20,-30){\line(1,0){40}} \put(60,-30){\line(1,0){40}}
\put(40,-30){\vector(1,0){0}} \put(80,-30){\vector(1,0){0}} \put(85,-30){\vector(1,0){0}}
\put(140,-30){\line(1,0){40}} \put(180,-30){\line(1,0){40}}
\put(160,-30){\vector(1,0){0}} \put(200,-30){\vector(1,0){0}} \put(205,-30){\vector(1,0){0}}
\put(240,0){\line(-2,-3){20}} \put(230,-15){\vector(2,3){0}} 

\put(40,0){\line(-2,-3){20}} \put(40,0){\line(2,-3){20}}
\put(80,0){\line(-2,-3){20}} \put(80,0){\line(2,-3){20}}
\put(160,0){\line(-2,-3){20}} \put(160,0){\line(2,-3){20}}
\put(200,0){\line(-2,-3){20}} \put(200,0){\line(2,-3){20}}
\end{picture}
\caption{Triangulation of a precell}
\label{fig:triangulation}
\end{figure}
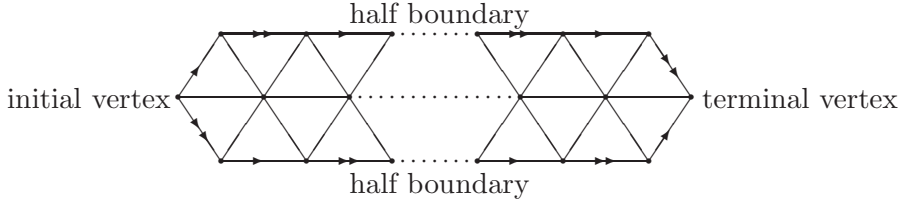

But this process produces some {\em bad links} when $m_{ij} > 2$.
At this stage, the link of each of the interior vertices is a hexagon,
but some of the real vertices have links that are 4-cycles or 5-cycles,
and so {\em bad}.
Figure~\ref{fig:badlink} shows what happens at the junction of two precells
when $m_{ij}=3$.

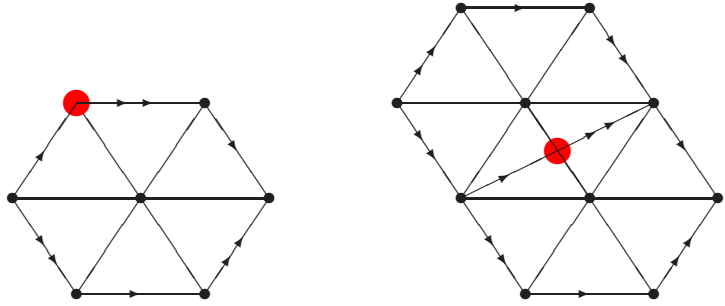
\begin{figure}
\setlength{\unitlength}{1.2pt}
\begin{picture}(400,110)(-60,-40)
\put(0,0){\circle*{3}} 

\put(40,0){\circle*{3}} \put(80,0){\circle*{3}}

\put(0,0){\line(1,0){40}} \put(40,0){\line(1,0){40}}
\put(40,0){\line(-2,3){20}} \put(40,0){\line(2,3){20}}
\put(40,0){\line(-2,-3){20}} \put(40,0){\line(2,-3){20}}

{\color{red}
\put(20,30){\circle*{8}}} 
\put(60,30){\circle*{3}} 

\put(0,0){\line(2,3){20}} \put(10,15){\vector(2,3){0}} 
\put(20,30){\line(1,0){40}} \put(36,30){\vector(1,0){0}} \put(44,30){\vector(1,0){0}}
\put(80,0){\line(-2,3){20}} \put(70,15){\vector(2,-3){0}} 

\put(20,-30){\circle*{3}} \put(60,-30){\circle*{3}} 

\put(0,0){\line(2,-3){20}} \put(10,-15){\vector(2,-3){0}} \put(14,-21){\vector(2,-3){0}}
\put(20,-30){\line(1,0){40}} \put(40,-30){\vector(1,0){0}} 
\put(80,0){\line(-2,-3){20}} \put(68,-18){\vector(2,3){0}} \put(72,-12){\vector(2,3){0}} 


\put(140,0){\circle*{3}} \put(220,0){\circle*{3}}

\put(180,0){\circle*{3}} 

\put(140,0){\line(1,0){40}} \put(180,0){\line(1,0){40}}
\put(180,0){\line(-2,3){20}} \put(180,0){\line(2,3){20}}
\put(180,0){\line(-2,-3){20}} \put(180,0){\line(2,-3){20}}

{\color{red}
\put(170,15){\circle*{8}}} 
\put(200,30){\circle*{3}} 

\put(140,0){\line(2,1){30}} \put(155,7.5){\vector(2,1){0}} 
\put(170,15){\line(2,1){30}} \put(182,21){\vector(2,1){0}} \put(188,24){\vector(2,1){0}}
\put(220,0){\line(-2,3){20}} \put(210,15){\vector(2,-3){0}} 

\put(160,-30){\circle*{3}} \put(200,-30){\circle*{3}} 

\put(140,0){\line(2,-3){20}} \put(150,-15){\vector(2,-3){0}} \put(154,-21){\vector(2,-3){0}}
\put(160,-30){\line(1,0){40}} \put(180,-30){\vector(1,0){0}} 
\put(220,0){\line(-2,-3){20}} \put(208,-18){\vector(2,3){0}} \put(212,-12){\vector(2,3){0}}

\put(120,30){\circle*{3}} 

\put(140,60){\circle*{3}} \put(180,60){\circle*{3}} 
\put(120,30){\line(2,3){20}} \put(128,42){\vector(2,3){0}} \put(132,48){\vector(2,3){0}}
\put(140,60){\line(1,0){40}} \put(160,60){\vector(1,0){0}} 
\put(180,60){\line(2,-3){20}} \put(188,48){\vector(2,-3){0}} 
\put(192,42){\vector(2,-3){0}} 

\put(160,30){\circle*{3}} 

\put(120,30){\line(1,0){40}} \put(160,30){\line(1,0){40}}
\put(160,30){\line(2,3){20}} \put(160,30){\line(2,-3){20}}
\put(160,30){\line(-2,3){20}} \put(160,30){\line(-2,-3){20}}

\put(120,30){\line(2,-3){20}} \put(128,18){\vector(2,-3){0}} \put(132,12){\vector(2,-3){0}}
\end{picture}
\caption{Triangulating the Cayley graph produces bad links at some vertices}
\label{fig:badlink}
\end{figure}

Those bad links are corrected by the addition of new edges, and then the attachment of higher dimensional simplices as necessary, in order to ensure that $\cX_{ij}$
continues to be {\em flag} 
(that is, any set of pairwise incident vertices forms a simplex).

This whole process of addition of edges and higher dimensional simplices that
terminates in the systolic complex $\cX_{ij}$ is described in detail in 
\cite[Section 3.2]{HuangOsajda}.
The following is clear from that description; 
part (i) is stated at the end of the second paragraph, and part (ii)
is clear from the description of the addition of edges in zigzag patterns.\\

\begin{lemma}
\label{lem:Xij_newedges}
\begin{mylist}
\item[(i)]
Each edge of $\cX_{ij} \setminus \Cay_{ij}$ is either an interior edge of a
single precell, or it joins two interior vertices within two distinct
overlapping precells $\Pi,\Pi'$ whose boundaries intersect in a path $\pi$ of
at least two edges, where $\pi$ contains the initial vertex of one of
$\Pi,\Pi'$, and the terminal vertex of the other. 
\item[(ii)] If $\Pi,\Pi'$ are overlapping precells, and  
$u_1,u_2,u_3$ are interior vertices with $u_1 \in \Pi$, $u_2,u_3 \in \Pi'$,
and such that $\{u_1,u_2\}$ and $\{u_1,u_3\}$ are edges,
then $\{u_2,u_3\}$ is an edge within $\Pi'$.
\end{mylist}
\end{lemma}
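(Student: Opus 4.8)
The plan is to derive both parts directly from the explicit description of the construction of $\cX_{ij}$ in \cite[Section 3.2]{HuangOsajda}; the work is really one of translating that description into the language of precells, half boundaries and interior vertices used here. First I would recall that the passage from $\Cay_{ij}$ to $\cX_{ij}$ proceeds in three stages: (a) the triangulation of each precell $\Pi$ shown in Figure~\ref{fig:triangulation}, which creates the interior vertices of $\Pi$ together with the interior edges of $\Pi$ --- the ``spine'' edges joining the initial vertex of $\Pi$ to the first interior vertex, consecutive interior vertices to each other, and the last interior vertex to the terminal vertex, as well as the edges joining each interior vertex to the real vertices of the two half boundaries of $\Pi$, exactly as depicted; (b) the repair of the bad links at real vertices, which, for each pair of overlapping precells $\Pi,\Pi'$ whose boundaries meet in a path $\pi$ of at least two edges, adds the family of edges joining interior vertices of $\Pi$ to interior vertices of $\Pi'$ in the zigzag pattern described in \cite{HuangOsajda}; and (c) the attachment of higher-dimensional simplices to make $\cX_{ij}$ flag. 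Stage (c) introduces no new edges, and stages (a) and (b) produce only the two kinds of edge named in part~(i).

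For part~(i) I would argue as follows. The edges of $\Cay_{ij}$ are precisely those labelled $a_i^{\pm1}$ and $a_j^{\pm1}$, so every edge of $\cX_{ij}\setminus\Cay_{ij}$ is created at stage (a) or stage (b). An edge created at stage (a) is, by the convention fixed above, an interior edge of the single precell being triangulated; it cannot be an interior edge of a second precell, since at least one of its endpoints is an interior vertex and an interior vertex belongs to exactly one precell. An edge created at stage (b) joins an interior vertex of some precell $\Pi$ to an interior vertex of an overlapping precell $\Pi'$, and by \cite[Section 3.2]{HuangOsajda} such edges are added only when the boundaries of $\Pi$ and $\Pi'$ meet in a path $\pi$ of at least two edges; that $\pi$ then contains the initial vertex of one of $\Pi,\Pi'$ and the terminal vertex of the other is exactly the intersection dichotomy for precell boundaries recalled in Section~\ref{sec:subcomplexes_Xij}.

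For part~(ii), note first that the edges $\{u_1,u_2\}$ and $\{u_1,u_3\}$ cannot be interior edges of a single precell: $u_1$ is an interior vertex of $\Pi$ while $u_2,u_3$ are interior vertices of the distinct precell $\Pi'$, and an interior vertex lies in only one precell, so neither edge has both endpoints in one precell. Hence, by part~(i), both are stage-(b) edges for the overlapping pair $\Pi,\Pi'$. I would then appeal to the shape of the zigzag pattern in \cite[Section 3.2]{HuangOsajda}: the interior vertices of $\Pi$ lying along the overlap are joined alternately to the interior vertices of $\Pi'$ lying along the overlap, so each interior vertex of $\Pi$ is joined to at most two interior vertices of $\Pi'$, and when it is joined to two of them those two are consecutive in the sequence of interior vertices of $\Pi'$ and are therefore already joined by one of the spine edges created when $\Pi'$ was triangulated. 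Applying this to $u_1$ and its two neighbours $u_2,u_3$ in $\Pi'$ gives that $\{u_2,u_3\}$ is an interior edge of $\Pi'$, as required.

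The main obstacle is not mathematical but a matter of careful bookkeeping: everything hinges on importing the precise combinatorics of \cite[Section 3.2]{HuangOsajda} --- in particular that zigzag edges are added only across overlaps of length at least two, and that within such a zigzag a single interior vertex meets only consecutive interior vertices on the opposite side --- and on matching it correctly with Figure~\ref{fig:triangulation} and with the intersection dichotomy of Section~\ref{sec:subcomplexes_Xij}. One should also dispatch the easy points that flag-completion adds no edges and that an edge, having only two endpoints, is associated with at most one pair of overlapping precells, so the pair $\Pi,\Pi'$ in each statement is unambiguous.
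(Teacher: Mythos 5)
Your proposal is correct and follows essentially the same route as the paper, which offers no detailed proof at all: it simply asserts that the lemma is clear from the description in \cite[Section 3.2]{HuangOsajda}, with part (i) stated explicitly at the end of the second paragraph there and part (ii) read off from the zigzag pattern of added edges. Your writeup is just a more explicit unpacking of that same appeal (three-stage construction, flag completion adds no edges, zigzag neighbours are consecutive and hence spine-adjacent), so there is nothing to correct.
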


\begin{lemma}
\label{lem:pathlength}
Suppose that $G$ is non-free, and
that $v,v'$ are real vertices of $\cX$ at distance $n$ within the graph $\cX_1$.
Then $v,v'$ are at distance at most $n\max(2,M-2)$ within $\Cay$,
where $M := \max_{i,j}(\{ m_{ij} : m_{ij} \ne \infty \})$.
\end{lemma}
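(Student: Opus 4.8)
The plan is to start from a geodesic path $v=w_0,w_1,\dots,w_n=v'$ in $\cX_1$ and produce from it a path in $\Cay$ from $v$ to $v'$ of length at most $n\max(2,M-2)$; since $G$ is non-free $M$ is defined, and since distances in $\Cay$ are independent of the generating set it is enough to work with the standard one. First I would classify the edges of $\cX_1$ using Lemma~\ref{lem:Xij_newedges}: each edge is an edge of $\Cay$, or a diagonal of a $2$-labelled precell (joining two real vertices at $\Cay$-distance at most $2$), or a triangulation edge inside a single precell of label $m\ge 3$ (joining a real boundary vertex to an interior vertex, or two interior vertices consecutive on the row in Figure~\ref{fig:triangulation}), or a zigzag edge joining interior vertices of two distinct overlapping precells; in particular every edge of $\cX_1$ joining two real vertices is of one of the first two kinds. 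Then I would split the geodesic at the real vertices it meets, writing $v=r_0,r_1,\dots,r_k=v'$ for these in order, with $\ell_j\ge 0$ (necessarily interior) vertices strictly between $r_j$ and $r_{j+1}$, so that $\sum_j(\ell_j+1)=n$; it then suffices to prove $d_\Cay(r_j,r_{j+1})\le(\ell_j+1)\max(2,M-2)$ for each $j$ and concatenate. For $\ell_j=0$ this is immediate from the edge classification, so assume $\ell_j\ge 1$.

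The combinatorial core is a statement about one triangulated precell, which I would read off directly from Figure~\ref{fig:triangulation}: in the closed triangulated precell $\overline{\Pi}$ of label $m$, for any two real vertices $a,b$ on $\partial\Pi$ the distance from $a$ to $b$ around $\partial\Pi$ — which bounds $d_\Cay(a,b)$, a boundary arc being a path in $\Cay$ — is at most $\max(2,m-2)$ times the distance from $a$ to $b$ in the $1$-skeleton of $\overline{\Pi}$. (The only short-cuts across $\overline{\Pi}$ run through the row of interior vertices; examining the cases — $a,b$ on a common half-boundary, on opposite half-boundaries, and the initial/terminal pair — the extreme ratio is realised by a pair joined by a path of length $2$ through a single interior vertex but lying nearly antipodally on $\partial\Pi$, giving ratio $\approx m/2\le\max(2,m-2)$.) Granting this, suppose all $\ell_j$ vertices strictly between $r_j$ and $r_{j+1}$ are interior vertices of a single precell $\Pi$, of label $m\le M$. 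Then $r_j,r_{j+1}\in\partial\Pi$; the sub-path lies in $\overline{\Pi}$ and, being a sub-path of a geodesic, is geodesic there too, so its endpoints lie at distance exactly $\ell_j+1$ in the $1$-skeleton of $\overline{\Pi}$; hence $d_\Cay(r_j,r_{j+1})\le(\ell_j+1)\max(2,m-2)\le(\ell_j+1)\max(2,M-2)$.

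The remaining case is when the interior vertices between $r_j$ and $r_{j+1}$ lie in $R\ge 2$ distinct precells $\Pi_1,\dots,\Pi_R$, met in that order along the geodesic. Consecutive ones overlap in a path $\pi_s$ of at least two edges with an initial vertex at one end and a terminal vertex at the other (Lemma~\ref{lem:Xij_newedges}(i)), and the zigzag edge along which the geodesic crosses from $\Pi_s$ to $\Pi_{s+1}$ joins the two interior vertices — one in each precell — that straddle a bad link at some real vertex $q_s\in\pi_s$, so that both of them are adjacent to $q_s$. I would then bound $d_\Cay(r_j,r_{j+1})\le\sum_{s=1}^R d_\Cay(q_{s-1},q_s)$ with $q_0:=r_j$, $q_R:=r_{j+1}$. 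Inside each $\Pi_s$ the relevant run of the geodesic is monotone along the row of Figure~\ref{fig:triangulation} (no new edge joins two interior vertices of one precell, and a geodesic is simple), and together with the edges from its ends to $q_{s-1}$ and $q_s$ it forms a path in $\overline{\Pi_s}$; by the previous paragraph $d_\Cay(q_{s-1},q_s)$ is at most $\max(2,M-2)$ times the length of that path. The crude count of that length is $t_s+1$, where $t_s$ is the length of run $s$, which would give $(\ell_j+R)\max(2,M-2)$ and overshoot by $(R-1)\max(2,M-2)$. The point that saves the day is that for every $s\ge 2$ one step can be shaved: using how the zigzag edge at a bad link sits relative to that vertex, Lemma~\ref{lem:Xij_newedges}(ii), and the fact that along the chain $\Pi_1,\dots,\Pi_R$ the overlap $\pi_{s-1}$ sits near the initial vertex of $\Pi_s$ while $\pi_s$ sits near its terminal vertex, $q_{s-1}$ turns out to be adjacent not only to the first but also to the second interior vertex of the run in $\Pi_s$, so a path of length $t_s$ from $q_{s-1}$ to $q_s$ in $\overline{\Pi_s}$ is available. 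This replaces $\ell_j+R$ by $(t_1+1)+\sum_{s\ge 2}t_s=\ell_j+1$, as required.

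The hard part will be making this ``shave one step'' argument precise and uniform, including the small-parameter cases ($m_{ij}=2,3$) where a run can consist of a single interior vertex. It requires more of the detailed description of the repair of bad links in \cite[Section~3.2]{HuangOsajda} than the bare statement of Lemma~\ref{lem:Xij_newedges}: specifically, that the zigzag edge at a bad link $q$ joins the interior vertices of the two adjoining precells that lie ``one step in'' from $q$, in a consistent sense along the zigzag, and that a geodesic traverses a chain of overlapping precells coherently, entering each at one end and leaving at the other. This is exactly the place where the constant $\max(2,M-2)$ is pinned down.
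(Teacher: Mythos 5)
Your reduction (split the $\cX_1$-geodesic at its real vertices and bound each segment separately) and your single-precell estimate are both sound, and match the opening of the paper's proof. But the crux of the lemma is the segment whose interior vertices lie in a chain of $R\ge 2$ overlapping precells, and there your argument has a genuine gap, which you yourself flag: the ``shave one step'' device rests on structural claims about the zigzag edges that are not consequences of Lemma~\ref{lem:Xij_newedges} and are not established anywhere in this paper. Specifically, you assume (a) that the edge along which the geodesic crosses from $\Pi_s$ to $\Pi_{s+1}$ joins two interior vertices that are both adjacent to a common real vertex $q_s$ on the overlap path, and (b) that $q_{s-1}$ is adjacent to the \emph{second} interior vertex of the run inside $\Pi_s$. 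Neither is obvious from the mere fact that the precells overlap in a path of at least two edges; the zigzag construction of Huang--Osajda joins many pairs of interior vertices at varying offsets along the overlap, so (a) and (b) would each need a careful case analysis of that construction. Without the shave, your count gives $(\ell_j+R)\max(2,M-2)$, which does not suffice.

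The paper avoids this entirely by a different bookkeeping that you should note: instead of tracking junction vertices $q_s$, assign to each interior vertex $v_i$ the initial vertex $u_i$ and terminal vertex $w_i$ of its precell $\Pi_i$. Since $v$ lies on $\partial\Pi_1$ one has $d(v,u_1)+d(v,w_1)=m$, and similarly at the other end, so one of the two families ($u$'s or $w$'s) has total endpoint cost at most $m$; and whenever $\Pi_i\ne\Pi_{i+1}$ the overlap condition of Lemma~\ref{lem:Xij_newedges}(i) gives $d_\Cay(u_i,u_{i+1})\le m-2$ directly (the overlap is a boundary path of length at least $2$ joining the initial vertex of one precell to the terminal vertex of the other). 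Summing yields $m+(n-2)(m-2)\le n\max(2,m-2)$. This uses nothing about which interior vertices the zigzag edges actually join, only which precells they connect, which is exactly the information Lemma~\ref{lem:Xij_newedges} supplies. To repair your proof you would either have to import and verify the detailed zigzag description from \cite[Section 3.2]{HuangOsajda}, or switch to this projection onto initial/terminal vertices.
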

\begin{proof}
Let $v,v_1,\ldots,v_n=v'$ be the sequence of vertices of $\cX$ on a path
of length $n$ in $\cX_1$. It follows from the triangle inequality that
it is sufficient to prove the result when all vertices $v_i$ with $1 \le i < n$
(i.e those that are strictly between $v$ and $v'$) are interior.

If $n=1$, then either $v$ and $v'$ are joined by an edge in $\cX_1$, or
they are joined by an internal edge in a precell of a subcomplex $\cX_{ij}$
with $m_{ij}=2$. In either case $d_{\Cay}(v,v') \le 2$, and the result holds.

If $n>1$, then there are interior vertices between $v$ and $v'$ on the path,
and the path must lie within an $\cX_{ij}$ subcomplex, for some $i,j$ 
with finite $m_{ij}>2$. Let $m := m_{ij}$.

If $n=2$, then the edges $\{v,v_1\}$ and $\{v_1,v'\}$ are interior edges 
of a precell $\Pi$ of which $v,v'$ are boundary vertices. In that case
$d_\Cay(v,v') \leq m $ and so, since $m \leq \max(4,2(m-2))$,
the result holds. 

So now suppose that $n>2$.  Then each of the interior vertices $v_i$ 
is in a unique precell $\Pi_i$ and, if $\Pi_i\neq \Pi_{i+1}$,
then the boundaries of $\Pi_i$ and $\Pi_{i+1}$ intersect in a path containing
at least two edges. 

Let $u_i,w_i$ be the initial and terminal vertices, respectively, of the
precell $\Pi_i$.
Then $v$ is on the boundary of $\Pi_1$, so $d(v,u_1)+d(v,w_1) = m$ and
similarly $d(v',u_{n-1})+d(v',w_{n-1}) = m$. Hence either
$d(v,u_1) + d(v',u_{n-1}) \le m$ or $d(v,w_1) + d(v',w_{n-1}) \le m$
(or both). In the first case, we define $v'_i:=u_i$ for each $i=1,\ldots,n-1$,
and in the second case $v'_i:=w_i$ for each $i=1,\ldots,n-1$.
So in either case we have  $d_{\Cay}(v'_1,v)+d_{\Cay}(v'_{n-1},v')\leq m$.
If $v'_i \neq v'_{i+1}$ then, since $\Pi_i$ and $\Pi_{i+1}$ overlap,
sharing at least two edges of their 
boundaries, we must have $d_\Cay(v'_i,v'_{i+1}) \leq m-2$.
We deduce that 
\begin{eqnarray*}
d_\Cay(v,v') &\leq& d_\Cay(v,v'_1) + \sum_{i=1}^{n-2}d_\Cay(v'_i,v'_{i+1}) +
           d_\Cay(v'_{n-1},v')\\
&\leq& m + (n-2)(m-2) \leq n\max(2,m-2).\end{eqnarray*} 
as required.
\end{proof}

\section{Examples of Artin group complexes}
\label{sec:Artin_examples}
\subsection{The dihedral Artin group $\Z^2=\langle a,b \mid ab=ba \rangle $}
\label{sec:dihartZ2}
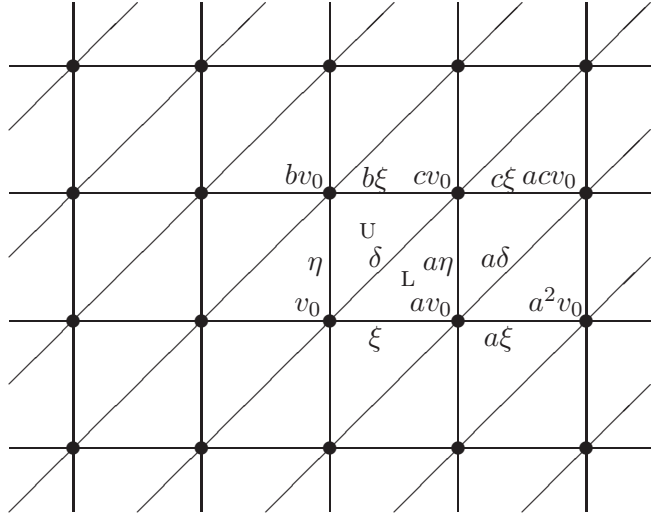
\begin{figure}
\setlength{\unitlength}{1.2pt}
\begin{picture}(260,160)(-120,-60)
\put(-100,-40){\line(1,0){200}}
\put(-80,-40){\circle*{4}} \put(-40,-40){\circle*{4}}
\put(0,-40){\circle*{4}} \put(40,-40){\circle*{4}} \put(80,-40){\circle*{4}}
\put(-100,0){\line(1,0){200}}
\put(-80,0){\circle*{4}} \put(-40,0){\circle*{4}}
\put(0,0){\circle*{4}} \put(40,0){\circle*{4}} \put(80,0){\circle*{4}}
\put(-100,40){\line(1,0){200}}
\put(-80,40){\circle*{4}} \put(-40,40){\circle*{4}}
\put(0,40){\circle*{4}} \put(40,40){\circle*{4}} \put(80,40){\circle*{4}}
\put(-100,80){\line(1,0){200}}
\put(-80,80){\circle*{4}} \put(-40,80){\circle*{4}}
\put(0,80){\circle*{4}} \put(40,80){\circle*{4}} \put(80,80){\circle*{4}}
\put(-80,-60){\line(0,1){160}} \put(-40,-60){\line(0,1){160}}
\put(0,-60){\line(0,1){160}}
\put(40,-60){\line(0,1){160}} \put(80,-60){\line(0,1){160}}
\put(-60,-60){\line(1,1){160}} 
\put(-100,-60){\line(1,1){160}} \put(-20,-60){\line(1,1){120}}
\put(-100,-20){\line(1,1){120}} \put(20,-60){\line(1,1){80}}
\put(-100,20){\line(1,1){80}} \put(60,-60){\line(1,1){40}}
\put(-100,60){\line(1,1){40}} 
\put(-11,3){$v_0$} \put(25,3){$av_0$} \put(62,3){$a^2v_0$} 
\put(-14,43){$bv_0$} \put(26,43){$cv_0$} \put(60,43){$acv_0$} 
\put(-7,16){$\eta$}
\put(29,16){$a\eta$} 
\put(12,17){$\delta$}
\put(47,17){$a\delta$} 
\put(22,11){$\sL$}
\put(9,26){$\sU$}
\put(12,-8){$\xi$} \put(48,-8){$a\xi$} 
\put(10,42){$b\xi$} \put(50,42){$c\xi$} 
\end{picture}
\caption{Systolic complex for $\Z^2$}
\label{fig:Z2_complex}
\end{figure}
Here the systolic complex $\cX$ is formed from the tesselation of the plane by 
the integer lattice, with each of the squares subdivided into two right-angled 
triangles (2-cells) by a diagonal (an interior edge) running bottom left to top right.
The generator $a$ translates one unit to the right, and $b$ one unit upwards.
So the vertices are all $(x,y)$ with $x,y \in \Z$, the edges are all pairs
$\{(x,y),(x+1,y)\}$, $\{(x,y),(x,y+1)\}$ and $\{(x,y),(x+1,y+1)\}$ 
and the 2-cells are all triples $\{(x,y),(x,y+1),(x+1,y+1)\}$
and $\{(x,y),(x+1,y),(x+1,y+1)\}$.
There are six orbits of the action of $G$ on $\cX$, and
$V_0 := \{ v_0,\xi,\eta,\delta,\sU,\sL\}$ is a set of orbit representatives,
where $v_0=(0,0)$ is the single vertex;
$\xi=\{(0,0),(1,0)\}$, $\eta= \{(0,0),(0,1)\}$, and $\delta = \{(0,0),(1,1)\}$
are edges parallel to the $x$-axis, the $y$-axis and the diagonal;
and $\sU = \{ (0,0),(0,1),(1,1)\}$ and $\sL = \{ (0,0),(1,0),(1,1)\} $
are upper and lower triangular 2-cells, as indicated in Figure~\ref{fig:Z2_complex}.

We have $\cB = \cA= \{ 1,a,b,d,\ainv,\binv,\dinv\}$, where we use 
the symbol $1$ to represent the identity element, $d$ to represent the product $ab$, and $\ainv,\binv,\dinv$ to 
denote the inverses of $a,b,d$ respectively.

In order to construct $\ctL$ we need to identify the directed geodesics.
We note that the simplices in a directed geodesic $\sigma_0,\ldots,\sigma_n$ can
only be of dimensions 0 or 1.

The link of an
edge (1-dimensional simplex) is a pair of vertices. 
So if $\sigma_{i+1}$ is an edge, then $\sigma_i$ and $\sigma_{i+2}$ must both be
vertices.

On the other hand, the link $\cX_u$ of a vertex (0-dimensional simplex) $u$ is 
the set of vertices and edges of a hexagon.
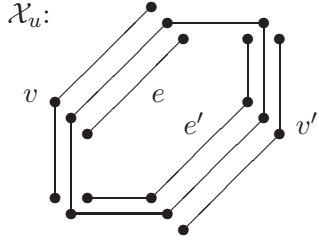
\begin{figure}
\setlength{\unitlength}{1.2pt}
\begin{picture}(200,80)(-150,52)
\put(-140,120){$\cX_u$:}
\put(-90,120){\circle*{3}} \put(-60,120){\circle*{3}} 
\put(-120,90){\circle*{3}} \put(-60,90){\circle*{3}} 
\put(-120,60){\circle*{3}} \put(-90,60){\circle*{3}} 
\put(-90,120){\line(1,0){30}} \put(-90,120){\line(-1,-1){30}} 
\put(-60,90){\line(0,1){30}} \put(-60,90){\line(-1,-1){30}} 
\put(-120,60){\line(0,1){30}} \put(-120,60){\line(1,0){30}} 
\put(-95,125){\circle*{3}} 
\put(-135,95){$v$}  
\put(-125,95){\circle*{3}} \put(-125,65){\circle*{3}} 
\put(-125,95){\line(1,1){30}} \put(-125,95){\line(0,-1){30}} 
\put(-55,85){\circle*{3}} \put(-55,115){\circle*{3}} 
\put(-85,55){\circle*{3}} \put(-50,85){$v'$} 
\put(-55,85){\line(-1,-1){30}} \put(-55,85){\line(0,1){30}} 
\put(-85,115){\circle*{3}}
\put(-115,85){\circle*{3}}
\put(-95,95){$e$}
\put(-115,85){\line(1,1){30}}
\put(-65,95){\circle*{3}} \put(-65,115){\circle*{3}}
\put(-95,65){\circle*{3}} \put(-115,65){\circle*{3}}
\put(-85,85){$e'$}
\put(-65,95){\line(-1,-1){30}} \put(-65,95){\line(0,1){20}}
\put(-95,65){\line(-1,0){20}}

\end{picture}
\caption{Link of vertex $u$, residues of $v,e$, and a 1-ball disjoint from each}
\label{fig:Z2_vtxlink}
\end{figure}

If $v,v'$ are vertices of that hexagon,
then each of $\Res(v,\cX_u)$ and $B_1(v',\cX_u)$ consists of the three vertices 
and two edges on a path on the perimeter of the hexagon.
For edges $e,e'$, $\Res(e,\cX_u)$ consists of $e$ and the two vertices
on $e$, whereas $B_1(e',\cX_u)$ consists of the four vertices and three edges
on a path on the perimeter of the hexagon, and contains the 1-balls of both of
the vertices on $e'$. 
It follows from the conditions on a directed geodesic 
that, if $\sigma_{i+1}$ is a vertex, then either
\begin{mylist}
\item[(i)] $\sigma_i$ and $\sigma_{i+2}$ are both vertices, opposite each 
other within the hexagon, or 
\item[(ii)]  $\sigma_i$ and $\sigma_{i+2}$ are both edges, opposite each 
other within the hexagon, or 
\item[(iii)] $\sigma_i$ is an edge and $\sigma_{i+2}$ is one of the two
vertices on the edge opposite $\sigma_i$ within the hexagon.
\end{mylist}

\begin{figure}
\setlength{\unitlength}{1.2pt}
\begin{picture}(200,150)(-50,-10)
\put(0,0){\circle*{3}} \put(30,0){\circle*{3}} \put(60,0){\circle*{3}} 
\put(90,0){\circle*{3}} \put(120,0){\circle*{3}} 
\put(150,0){\circle*{3}} \put(180,0){\circle*{3}} 
\put(0,30){\circle*{3}} \put(30,30){\circle*{3}} \put(60,30){\circle*{3}} 
\put(90,30){\circle*{3}} \put(120,30){\circle*{3}}  
\put(150,30){\circle*{3}} \put(180,30){\circle*{3}} 
\put(0,60){\circle*{3}} \put(30,60){\circle*{3}} \put(60,60){\circle*{3}} 
\put(79,63){$v_0$} \put(90,60){\circle*{1}} 
\put(120,60){\circle*{3}} \put(150,60){\circle*{3}} \put(180,60){\circle*{3}} 
\put(0,90){\circle*{3}} \put(30,90){\circle*{3}} \put(60,90){\circle*{3}} 
\put(90,90){\circle*{3}} \put(120,90){\circle*{3}} 
\put(150,90){\circle*{3}} \put(180,90){\circle*{3}} 
\put(0,120){\circle*{3}} \put(30,120){\circle*{3}} \put(60,120){\circle*{3}} 
\put(90,120){\circle*{3}} \put(120,120){\circle*{3}} 
\put(150,120){\circle*{3}} \put(180,120){\circle*{3}} 
\put(-10,0){\line(1,0){200}} \put(-10,30){\line(1,0){200}} \put(-10,60){\line(1,0){200}}
\put(-10,90){\line(1,0){200}} \put(-10,120){\line(1,0){200}} 
\put(0,-10){\line(0,1){140}} \put(30,-10){\line(0,1){140}} \put(60,-10){\line(0,1){140}} 
\put(90,-10){\line(0,1){140}} \put(120,-10){\line(0,1){140}} 
\put(150,-10){\line(0,1){140}} \put(180,-10){\line(0,1){140}} 
\put(-10,-10){\line(1,1){140}}
\put(20,-10){\line(1,1){140}}
\put(50,-10){\line(1,1){140}}
\put(80,-10){\line(1,1){110}}
\put(110,-10){\line(1,1){80}} \put(140,-10){\line(1,1){50}}
\put(170,-10){\line(1,1){20}}
\put(-10,20){\line(1,1){110}} \put(-10,50){\line(1,1){80}} \put(-10,80){\line(1,1){50}}
\put(-10,110){\line(1,1){20}} 
{\color{blue}
\thicklines
\put(95,62){\circle*{1}} \put(100,62){\circle*{1}} \put(105,62){\circle*{1}} 
\put(110,62){\circle*{1}} \put(115,62){\circle*{1}}  
\put(125,62){\circle*{1}} \put(130,62){\circle*{1}} \put(135,62){\circle*{1}} 
\put(140,62){\circle*{1}} \put(145,62){\circle*{1}}  
\put(155,62){\circle*{1}} \put(160,62){\circle*{1}} \put(165,62){\circle*{1}} 
\put(170,62){\circle*{1}} \put(175,62){\circle*{1}}  
\put(120,60){\circle*{5}} \put(150,60){\circle*{5}} \put(180,60){\circle*{5}} 
\put(200,60){\circle{12}}\put(197,59){${}_1$}
\put(85,53){\circle*{1}} \put(80,48){\circle*{1}} \put(75,43){\circle*{1}} 
\put(70,38){\circle*{1}} \put(65,33){\circle*{1}} \put(60,28){\circle*{1}} 
\put(55,23){\circle*{1}} \put(50,18){\circle*{1}}  \put(45,13){\circle*{1}} 
\put(40,8){\circle*{1}} \put(35,3){\circle*{1}}  \put(30,0){\circle*{1}} 
\put(60,30){\circle*{5}} \put(30,0){\circle*{5}}
\put(25,-10){\circle{12}}\put(23,-11){${}_1$}
\put(93,66){\circle*{1}} \put(96,72){\circle*{1}} \put(99,78){\circle*{1}} 
\put(102,84){\circle*{1}} \put(105,90){\circle*{1}}  
\put(108,96){\circle*{1}} \put(111,102){\circle*{1}} \put(114,108){\circle*{1}} 
\put(117,114){\circle*{1}} \put(120,120){\circle*{1}}  
\put(90,90){\line(1,0){30}} \put(120,120){\circle*{5}} 
\put(120,135){\circle{12}}\put(117,134){${}_2$}
\put(94,56){\circle*{1}} \put(98,52){\circle*{1}} \put(102,48){\circle*{1}} 
\put(106,44){\circle*{1}} \put(110,40){\circle*{1}} \put(114,36){\circle*{1}} 
\put(118,32){\circle*{1}} \put(122,28){\circle*{1}}  \put(126,24){\circle*{1}} 
\put(130,20){\circle*{1}}  \put(134,16){\circle*{1}} \put(138,12){\circle*{1}}  
\put(142,8){\circle*{1}} \put(146,4){\circle*{1}} 
\put(155,2){\circle*{1}} \put(160,2){\circle*{1}} 
\put(165,2){\circle*{1}} \put(170,2){\circle*{1}} \put(175,2){\circle*{1}} 
 \put(90,30){\line(1,1){30}} \put(120,30){\circle*{5}}  
\put(120,0){\line(1,1){30}} \put(150,0){\circle*{5}} \put(180,0){\circle*{5}}
\put(200,0){\circle{12}}\put(197,-1){${}_3$}
\put(84,57){\circle*{1}} \put(78,54){\circle*{1}} \put(72,51){\circle*{1}} 
\put(66,48){\circle*{1}} \put(60,45){\circle*{1}} \put(54,42){\circle*{1}} 
\put(48,39){\circle*{1}} \put(42,36){\circle*{1}}  \put(36,33){\circle*{1}} 
\put(60,30){\line(0,1){30}} \put(30,30){\circle*{5}}
\put(25,32){\circle*{1}}  \put(20,32){\circle*{1}} \put(15,32){\circle*{1}}  
\put(10,32){\circle*{1}} \put(5,32){\circle*{1}} 
\put(0,30){\circle*{5}}
\put(-5,20){\circle{12}}\put(-7,19){${}_3$}
\put(28,25){\circle*{1}}  \put(23,20){\circle*{1}} \put(18,15){\circle*{1}}  
\put(13,10){\circle*{1}} \put(8,5){\circle*{1}} 
\put(0,0){\circle*{5}}
\put(-5,-10){\circle{12}}\put(-7,-11){${}_3$}
}
\end{picture}
\caption{Directed geodesics in complex of $\Z^2$}

\end{figure}
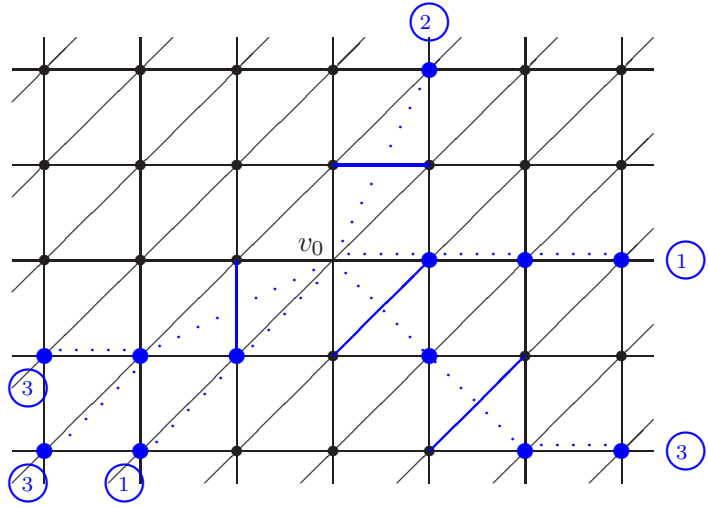

Hence we see that
the directed geodesics of the form $\gamma(v_0,gv_0)$
are of three different types, which can be described in terms of the diagram:
\begin{mylist}
\item[(1)] sequences of consecutive vertices $v_0,v_1,v_2,\ldots,v_n$ on one of the six rays through $v_0$, 
\item[(2)] sequences $v_0,\sigma_1,v_1,\sigma_2,v_2,\ldots,\sigma_k,v_k$ of 
alternately vertices and edges, where the vertices are all on a line out 
of $v_0$, the edges are bisected by that line, and the vertices and
edges are listed in the order in which they meet the line,
\item[(3)] sequences $v_0,\sigma_1,v_1,\ldots, \sigma_k,v_k,v_{k+1},\ldots,v_{k+m}$,
for which the sequence $v_0,\sigma_1,v_1,\ldots,\sigma_k,v_k$ is a sequence of type 2, and $v_k,v_{k+1}\ldots,v_{k+m}$ is the sequence of consecutive vertices on a ray that starts at $v_k$, one of the two rays through $v_k$ that are 
adjacent to the continuation of the line from $v_0$ to $v_k$.
\end{mylist}
Considering these three types,we can describe all the words in the language $\ctL$.
Corresponding to directed geodesics of type 1, we have all 
words $\alpha^n$ for $\alpha \in \cA$.
Corresponding to directed geodesics of type 2, we have all words 
of one of the forms
\[(ad)^n,\,(bd)^n,\,(\ainv b)^n,\,(\dinv\ainv)^n,\,(\dinv\binv)^n,\,(\binv a)^n.\] 
Corresponding to directed geodesics of type 3, we have all words of one of the forms
\begin{eqnarray*}
&&(ad)^ka^m,\,(ad)^kd^m,\,(bd)^kb^m,\,(bd)^kd^m,\,(\ainv b)^k\ainv^m,\,(\ainv b)^kb^m,\\
&&(\dinv\ainv)^k\ainv^m,\,(\dinv\ainv)^k\dinv^m,\,(\dinv\binv)^k\binv^m,\,
(\dinv\binv)^k\dinv^m,\,(\binv a)^ka^m,\,(\binv a)^k\binv^m.
\end{eqnarray*} 

\subsection{The right-angled Artin group $\Z^2 * \Z$}
\label{sec:Z2freeZ}
Let $G$ be the Artin group $\langle a, b, c \mid ab=ba\rangle$, isomorphic to
$\Z^2 * \Z$, and let $\cA =\{ 1,a,b,c,d,\ainv,\binv,\cinv,\dinv \}$, 
where $d$ represents the product $ab$, and $\ainv,\binv,\cinv,\dinv$ represent the inverses of $a,b,c,d$.
Let $G_{ab}$ be the subgroup $\langle a,b\rangle$, 
let $\cX_{ab}$ be the complex for $G_{ab}$ described in Section 
~\ref{sec:dihartZ2}, and let $\ctL_{ab}$ be the associated language
over $\{1,a,b,d,\ainv,\binv,\dinv\}$.
The systolic complex $\cX$ for $G$ that is built according to the construction of \cite{HuangOsajda} 
can be considered as a
`tree' of copies of $\cX_{ab}$; at each vertex of any given copy of $\cX_{ab}$
are attached edges labelled by $c$ and its inverse $\cinv$, whose targets are 
the basepoints (identity vertices) of further copies of $\cX_{ab}$, but at most 
one edge joins distinct copies of $\cX_{ab}$.  
Directed geodesics between two vertices of $\cX$ are formed as
concatenations of directed geodesics between vertices in copies of
$\cX_{ab}$ and paths along edges labelled $c$ or $\cinv$.
The biautomatic language $\ctL$ consists of all words of the form
$w_1\cdots w_k$, with $k \geq 1$, where for each $j$,
either $w_j \in \ctL_{ab}$ or $w_j=c^{i_j}$, for $i_j \neq 0$, 
alternately, and for $j>1$, $w_j$ is non-empty.

\subsection{The right-angled Artin group $\F_2 \times \Z$}
\label{sec:F2timesZ}
Let $G$ be the Artin group $\langle a, b, c \mid ab=ba,ac=ca\rangle$,
isomorphic to $\F_2 \times \Z$, and let
$\cA =\{ 1,a,b,c,d,e,\ainv,\binv,\cinv,\dinv,\einv \}$, 
where $d$, $e$ represent the products $ab$ and $ac$, and $\ainv,\binv,\cinv,\dinv,\einv$ represent the inverses of $a,b,c,d, e$.
Let $G_{ab}$, $G_{ac}$ be the subgroups $\langle a,b\rangle$ and 
 $\langle a,c\rangle$, let $\cX_{ab}$, $\cX_{ac}$ be the complexes for
$G_{ab}$, $G_{ac}$ described in Section \ref{sec:dihartZ2},
and let $\ctL_{ab}$, $\ctL_{ac}$ be the associated languages
over $\{1,a,b,d,\ainv,\binv,\dinv \}$ and $\{1,a,c,e,\ainv,\cinv,\einv \}$.
The systolic complex $\cX$ for $G$ that is built according to the construction of \cite{HuangOsajda} 
can be considered as a
`tree' of copies of $\cX_{ab}$ and $\cX_{ac}$; 
a copy of $\cX_{ac}$ is attached along each `ray' of $a$-edges within any given copy of $\cX_{ab}$, and similarly
a copy of $\cX_{ab}$ is attached along each `ray' of $a$-edges within any given copy of $\cX_{ac}$,
but copies of $\cX_{ab}$ and $\cX_{ac}$ can only intersect is a single $a$-ray, and
distinct copies of either $\cX_{ab}$ or $\cX_{ac}$ must have trivial 
intersection.

Since every subsequence of a directed geodesic must be a directed geodesic,
we can see that a directed geodesic between two vertices of $\cX$ must be a 
sequence $\sigma_1=\sigma_{i_0},\ldots,\sigma_{i_1},\ldots,\sigma_{i_j},\ldots,\sigma_{i_k}=\sigma_n$,
for which the subsequences $\gamma_j=\sigma_{i_{j-1}},\ldots,\sigma_{i_j}$
are directed geodesics within subcomplexes $\cX_{ab}$ and $\cX_{ac}$ 
alternately, and so in particular $\sigma_{i_1},\ldots,\sigma_{i_{k-1}}$
are all on intersection rays. The final subsequence might be along an 
intersection ray, but none of the others.
And, apart from $\sigma_0=\sigma_{i_0}$ and $\sigma_n=\sigma_{i_k}$, the simplices $\sigma_{i_j}$ need not necessarily be vertices.

To understand the precise form of such a sequence
we need to consider what must happen when a simplex $\sigma_i$ is
on a ray of $a$-edges, while $\sigma_{i-1}$, $\sigma_{i+1}$ are not
on the ray, but each is within (a distinct) one of the two subcomplexes containing that ray.

When $\sigma_i$ is a vertex $v=gv_0$, then its link is the union of two intersecting
hexagons, as shown in Figure \ref{fig:F2xZ_vtxlink}. In this example there are
five orbits of $G$ on edges, and we denote by 
$\xi=\{v_0,av_0\}$, $\eta=\{v_0,bv_0\}$, $\theta=\{v_0,cv_0\}$
$\delta=\{v_0,dv_0\}$, $\zeta=\{v_0,ev_0\}$,
the representatives of those
orbits that pass through the vertex $v_0$. 
The vertices $gav_0$ and $g\ainv v_0$ are on the intersection ray, and the
outer and inner hexagons are within 
the two subcomplexes that intersect on that ray. 

Suppose first that $\sigma_{i-1}$ is a vertex (not on the intersection ray).
Within the link, the residue of
any one of those eight vertices has the same shape, is a path of length 3
containing three vertices and two edges. This residue is disjoint from the
1-ball of precisely two vertices but of no edges  from the opposite subcomplex.

So, for example, if $\sigma_{i-1}=g\binv v_0$ (and $\sigma_i=gv_0$), then $\sigma_{i+1}=gcv_0$ or $g\einv v_0$; in the corresponding word, this situation gives us $\lambda_i=b$ and $\lambda_{i+1}=c$ or $\einv$, but not $\lambda_{i+1}=\cinv$ or $e$, so we see that words of the form $b^rc^s$ label directed geodesics
that are sequences of vertices, but those of the form $b^r\cinv^s$ do not.

Now suppose that $\sigma_{i-1}$ is an edge.
If $\sigma_{i-1}$ is one of the four edges in the orbit of $\xi$, 
then its residue within the link consists of just two vertices, and is disjoint 
from the 1-ball of two edges and of four vertices within the opposite subcomplex;
for example, if $\sigma_{i-1}= g\dinv \xi$ , then 
$\sigma_{i+1}$ could be any one of 
$gc\xi$, $g\einv \xi$, $gcv_0$, $gev_0$, $g\einv v_0$, or $g\cinv v_0$. 

But if $\sigma_{i-1}$ is one of the remaining eight edges, then its residue within 
the link is disjoint from the 1-ball of two edges and two vertices in the opposite
subcomplex; for example, if $\sigma_{i-1} = g\binv \delta$, then
$\sigma_{i+1}$ could be any one of $g\ainv \zeta$, $g\einv \theta$, $gcv_0$ or $g\einv v_0$.

When $\sigma_i$ is an edge of the intersection ray, then its link is a set of
four vertices, and $\sigma_{i-1}$ and $\sigma_{i+1}$ can be any vertices within
that link in distinct subcomplexes.

From the above analysis, we see from the description in
Section~\ref{sec:dihartZ2} that each of the subsequences $\gamma_j$ can be
either (1) a sequence of vertices, or (2) a sequence of edges and vertices
alternately, or (3) a sequence of type (2) followed by one of type (1).
$\gamma_{j+1},\ldots,\gamma_k$ must all be of type (1). It is possible that
$\gamma_j$ might end with an edge, which would then be the first simplex in
$\gamma_{j+1}$.
Otherwise there are some restrictions on which subwords
$\lambda_{i_j}\lambda_{{i_j}+1}$ we might see at points within words in $\ctL$
that correspond to the junction point of two concatenated directed geodesics.

\begin{figure}
\setlength{\unitlength}{2.5pt}
\begin{picture}(200,68)(-150,55)
\put(-125,120){$\cX_v$:}
\put(-90,120){\circle*{2}} \put(-60,120){\circle*{2}} 
\put(-90,123){$gbv_0$} \put(-60,123){$gdv_0$} 
\put(-120,90){\circle*{2}} \put(-60,90){\circle*{2}} 
\put(-130,90){$g\ainv v_0$} \put(-58,90){$gav_0$} 
\put(-120,60){\circle*{2}} \put(-90,60){\circle*{2}} 
\put(-130,60){$g\dinv v_0$} \put(-86,60){$g\binv v_0$} 
\put(-90,120){\line(1,0){30}} \put(-90,120){\line(-1,-1){30}} 
\put(-78,122){$gb\xi$} \put(-112,105){$g\ainv \delta$} 
\put(-60,90){\line(0,1){30}} \put(-60,90){\line(-1,-1){30}} 
\put(-59,105){$ga\eta$} \put(-75,70){$gB\delta$} 
\put(-120,60){\line(0,1){30}} \put(-120,60){\line(1,0){30}} 
\put(-128,75){$g\dinv\eta$} \put(-110,57){$g\dinv \xi$} 
\put(-90,110){\circle*{2}} \put(-70,110){\circle*{2}} 
\put(-90,107){$gcv_0$} \put(-78,107){$gev_0$} 
\put(-110,70){\circle*{2}} \put(-90,70){\circle*{2}} 
\put(-110,73){$g\einv v_0$} \put(-95,73){$g\cinv v_0$} 
\put(-90,110){\line(1,0){20}} \put(-90,110){\line(-3,-2){30}} 
\put(-82,112){$gc\xi$} \put(-109,95){$g\ainv \zeta$} 
\put(-60,90){\line(-1,2){10}} \put(-60,90){\line(-3,-2){30}} 
\put(-71,95){$ga\theta$} \put(-83,80){$g\cinv \zeta$} 
\put(-110,70){\line(-1,2){10}} \put(-110,70){\line(1,0){20}} 
\put(-114,80){$g\einv \theta$}  \put(-105,67){$g\einv \xi$}  
\end{picture}
\caption{Link of a vertex on an intersection ray}
\label{fig:F2xZ_vtxlink}
\end{figure}

\subsection{The dihedral Artin group $G(A_2)=\langle a,b \mid aba=bab \rangle $}
\label{sec:dihartA2}
The systolic complex constructed as in \cite{HuangOsajda} for the 2-generator
Artin group of type $A_2$ is already significantly more complicated than the
complex for $\Z^2$. 
Each of the precells in the Cayley graph is triangulated by the addition of one
vertex, six edges and six 2-cells. Each new vertex then has a link that is a
6-cycle, but the process also creates 4-cycles in the links of some of the
original vertices as was shown in Figure~\ref{fig:badlink}.

Those bad links are corrected by the addition of edges, and then the
attachment of 2-cells and 3-cells,
so that the complex continues to be flag. 
The process terminates with a
3-dimensional systolic complex, which has three orbits of vertices, ten
orbits of edges, twelve orbits of 2-cells and four orbits of 3-cells.

We have $\cB = \cA =\{1,a,b,c,d,e,\ainv,\binv,\cinv,\dinv,\einv\}$,
where $c$, $d$ and $e$ represent the group elements $ab$, $ba$ and $aba$,
respectively and, as before, $\ainv$ represents the inverse of $a$, etc.
The minimised deterministic finite state automaton with accepted language
$\ctL$ has $51$ states, and we are unable to provide a useful description
of $\ctL$. Instead, we shall just give some examples of words in the
language. In the following description, when we refer to the `precell
based at $g$' for an element $g \in G$, we mean the precell of which the
initial vertex is the real vertex labelled $g$.

We have $a^n, \ainv^n, b^n, \binv^n \in \ctL$ for all $n \ge 0$, and these
label corresponding directed geodesics through real vertices of the complex.
But the accepted word for the group element $ab$ is $1c$. A
directed geodesic with that label goes from $v_0$ to a 2-cell that
contains the interior vertices of the precells based at $1,a$ and $\binv$, and
from there to the real vertex labelled $ab$.
Similarly, the accepted word for $ba$ is $1d$. The word for $a^{-1}b^{-1}$
is $\dinv 1$, and labels a corresponding directed geodesic in the reverse
direction.  Similarly, $\cinv 1 \in \ctL$.

The accepted word for $aba$ is $1e$, with corresponding directed geodesic
starting from $v_0$ and passing through the interior vertex of the precell
based at $1$. In fact $(1e)^n\in \ctL$ for all $n \ge 0$.
The word for $(aba)^{-n}$ is $(\einv 1)^n$, and labels the corresponding directed
geodesic in the opposite direction.

The word in $\ctL$ for $ab^{-1}$ is $\cinv d$. It seems easiest to describe
a corresponding directed geodesic with this label as one that starts at the
real vertex labelled $c$, proceeds to the interior edge (1-cell) that
joins the interior vertex of the precell based at $1$ to the real vertex
labelled $e$, and from there to the real vertex labelled $d$. Similarly,
the word in $\ctL$ for $a^{-1}b$ labels a directed geodesic from the real vertex
labelled $a$ to the real vertex labelled $b$, passing through an interior edge
within the precell based at $1$.

The word in $\ctL$ for $a^2b$ is $1ac$, with directed geodesic going from
$1$ to the interior edge joining the real vertex labelled $a$ and
the interior vertex for the precell based at $1$, and from there to the interior
vertex of the precell based at $a$, and finally to the real vertex labelled
$a^2b$.  A directed geodesic labelled $1ae$ has the same first two edges but
the final edge leads to the real vertex labelled $(ab)^2=_G a^2ba =_G ae$.

These examples suggest that it might be easier to understand the
nature of the directed geodesics rather than the words that label them, which
is not surprising given that the labelling words depend on a choice of orbit
representatives of the simplices, whereas the directed geodesics themselves
have no such dependency.

\section{Properties of the Artin group structures}
\label{sec:properties}

From now on we suppose that $G$ is a systolic Artin group, and $\cX$
the associated systolic complex.
We will always assume that $v_0$ is the vertex of $\cX$ that is
equal to the basepoint of the embedded Cayley graph $\Cay$, and labelled by the
identity element of $G$.
Since all simplex stabilisers are trivial, each of the sets $\Lambda_\sigma$ is
a singleton set; we denote by $\lambda_\sigma$ its single element.

It is clear that the structure of $\ctL$ must depend on our choice of the set $V_0$ of orbit representatives, since this determines the labels. But we can make a choice of $V_0$
that imposes a sensible structure, as we see below.

Suppose first that we have a 2-generator Artin group.
Let $\Pi_0$ be the unique precell that has $v_0$ as its initial vertex.
We say that $V_0$ is {\em based on $\Pi_0$} if 
\begin{mylist}
\item[(1)] $V_0$ contains $v_0$ and the two real edges through $v_0$ on the
boundary of $\Pi_0$;
\item[(2)] $V_0$ contains all the interior vertices and edges of $\Pi_0$;
\item[(3)] any simplex in $V_0$ contains at least one vertex within $\Pi_0$.
\end{mylist} 

The following lemma is straightforward to prove, and is used in the proof of
the proposition that follows.
\begin{lemma}
Suppose that $v_0$ is the identity vertex, and $V_0$  is based on $\Pi_0$. Then
\begin{mylist}
\item[(i)] the label $\lambda_v$ of a real vertex of $\cX$ is the same as its
label as a vertex of $\Cay$;
\item[(ii)] the label $\lambda_v$ of an interior vertex of $\cX$ is the
same as the label of the initial vertex of the unique precell containing $v$;
that is, $\lambda_v=g$ where $v$ is an interior vertex of $g\Pi_0$.
\end{mylist}
\end{lemma}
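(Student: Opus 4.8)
The plan is to unwind the definition of the label $\lambda_v$ and combine it with the three conditions defining "$V_0$ based on $\Pi_0$". Recall that $\lambda_v$ is the unique element of $\Lambda_v=\{g\in G:v=g\bar v\}$, where $\bar v$ is the representative in $V_0$ of the orbit $Gv$; so it suffices, in each case, to exhibit a group element $g$ with $v=g\bar v$ and then read off the label of $v$ as a vertex of the embedded Cayley graph $\Cay$ (namely the unique $h$ with $v=hv_0$) and compare.

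For part (i), let $v$ be a real vertex. The real vertices form a single orbit under $G$ (they are exactly the vertices of $\Cay$, on which $G$ acts regularly), so $\bar v$ is the unique real vertex in $V_0$; by condition (1) that is $v_0$ itself. Hence $\Lambda_v=\{g:v=gv_0\}$, which is precisely the set whose single element is the $\Cay$-label of $v$; so $\lambda_v$ equals the $\Cay$-label, as claimed. For part (ii), let $v$ be an interior vertex, lying in the (unique) precell containing it, which has the form $g\Pi_0$ for some $g\in G$ with $g$ the $\Cay$-label of its initial vertex (every precell is a $G$-translate of $\Pi_0$, and $G$ acts regularly on the real vertices, so this $g$ is well defined). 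Then $g^{-1}v$ is an interior vertex of $\Pi_0$, hence lies in $V_0$ by condition (2); call it $w$. I must check that $w=\bar v$, i.e. that $w$ is \emph{the} representative in $V_0$ of the orbit $Gv$: this is where condition (3) together with the structure of the $G$-action on $\Pi_0$ is used — distinct interior vertices of $\Pi_0$ lie in distinct $G$-orbits (otherwise a nontrivial $g$ would map $\Pi_0$ to an overlapping precell while fixing a vertex, contradicting free action / the intersection description of precell boundaries), and condition (3) guarantees $V_0$ contains no other candidate. Granting $w=\bar v$, we get $v=gw=g\bar v$, so $g\in\Lambda_v$ and therefore $\lambda_v=g$, which is exactly the $\Cay$-label of the initial vertex of $g\Pi_0$, as required.

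The main obstacle is the verification in part (ii) that the interior vertex $w=g^{-1}v$ of $\Pi_0$ is genuinely the chosen orbit representative $\bar v$ and not merely \emph{some} element of $V_0$ in the orbit $Gv$. This reduces to showing that no two distinct interior vertices of $\Pi_0$ are $G$-equivalent, which follows from the triviality of simplex stabilisers (so a nontrivial $g$ moves every simplex) combined with Lemma~\ref{lem:Xij_newedges} and the description in Section~\ref{sec:subcomplexes_Xij} of how precell boundaries can intersect: a $G$-translate $g\Pi_0$ sharing an interior vertex with $\Pi_0$ would force $g\Pi_0=\Pi_0$ (since overlapping precells meet only along boundary paths, which contain no interior vertices), whence $g$ fixes that interior vertex and so $g=1$. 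Once this uniqueness is in hand, both parts are immediate, and the lemma follows.
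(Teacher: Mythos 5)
Your proof is correct, and it is exactly the routine unwinding of the definitions that the paper has in mind when it declares the lemma ``straightforward to prove'' and omits the argument. One minor simplification: since $V_0$ is by definition a set of orbit representatives (one element per orbit), the fact that $w=g^{-1}v\in V_0$ lies in the orbit $Gv$ already forces $w=\bar v$, so your careful verification that distinct interior vertices of $\Pi_0$ lie in distinct orbits is not needed for the lemma itself --- though it is a worthwhile consistency check that condition (2) of ``based on $\Pi_0$'' is compatible with $V_0$ being a set of orbit representatives.
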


\begin{proposition}
\label{prop:dih_cA_Garside}
Let $G$ be the 2-generator Artin group 
\[ \langle a, b \mid {}_m(a,b)= {}_m(b,a) \rangle. \]
If $V_0$ is based on $\Pi_0$, then each of the sets $\cA$ and $\cB$ consists
of (a set representing) the complete set of Garside generators of $G$,
together with their inverses and the identity element.
\end{proposition}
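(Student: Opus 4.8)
The plan is to identify the alphabets $\cA$ and $\cB$ explicitly by working out, for the 2-generator Artin group $G = \langle a,b \mid {}_m(a,b)={}_m(b,a)\rangle$, all the orbits of simplices of $\cX_{ab}$ under $G$ together with the (unique) labels $\lambda_{\sigma,\tau}$ attached to the relevant pairs. Recall that the Garside generators of $G$ are the divisors of the Garside element $\Delta = {}_m(a,b) = {}_m(b,a)$ in the positive monoid; these form a lattice under divisibility, and for the dihedral Artin group on $m$ letters the proper nontrivial ones are exactly the $2(m-1)$ elements obtained as the $m-1$ nonempty proper prefixes of ${}_m(a,b)$ and the $m-1$ nonempty proper prefixes of ${}_m(b,a)$ (together with $\Delta$ itself). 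So the assertion amounts to showing $\cA \setminus \{1\}$ (and likewise $\cB \setminus \{1\}$, which by Lemma~\ref{lem:Xij_newedges}-based reasoning will coincide with $\cA$ here) is the inverse-closed set $\{\delta^{\pm 1} : \delta \text{ a Garside generator}\}$.

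The key steps, in order: First, invoke the preceding Lemma to pin down the labels $\lambda_v$ of the real and interior vertices once $V_0$ is based on $\Pi_0$. For a real vertex $v$, $\lambda_v$ is its Cayley-graph label, an element of $G$; for an interior vertex of $g\Pi_0$, $\lambda_v = g$. Second, enumerate the simplices of $\cX_{ab}$ that share a vertex with $\Pi_0$ — by condition (3) these are precisely the $\bar\sigma$, and condition (1)--(2) ensures $V_0$ contains $v_0$, the two real boundary edges of $\Pi_0$, and all interior simplices of $\Pi_0$. Third, for each orbit representative $\bar\sigma$ and each simplex $\bar\rho$ that is a face or coface of $\bar\sigma$ (for $\cB$) or disjoint from and co-spanning with $\bar\sigma$ (for $\cA$), compute $\lambda_{\bar\rho,\bar\sigma} = \lambda_{\bar\rho}^{-1}\lambda_{\bar\sigma}$ and check it is a Garside generator or its inverse. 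The crucial geometric input is the structure of $\cX_{ab}$ from Section~\ref{sec:subcomplexes_Xij}: the interior vertices of a precell lie ``between'' its initial and terminal vertices, and the new edges added to fix bad links connect interior vertices of overlapping precells (Lemma~\ref{lem:Xij_newedges}). When we translate an interior vertex $u$ of $\Pi_0$ back to $v_0$ via $\lambda_u^{-1} = 1$ it stays in $\Pi_0$; when we translate a real vertex $gv_0$ on the boundary of $\Pi_0$, the label $g$ is a prefix of $\Delta$, hence a Garside generator. Fourth, conversely, show every Garside generator arises: each nonempty proper prefix $p$ of ${}_m(a,b)$ labels the real vertex $pv_0$ on the lower half-boundary of $\Pi_0$, and the pair $(v_0, pv_0)$ — with $v_0 \subset$ (or disjoint from, for the edge/triangle cofaces) the appropriate simplex — contributes $p$ to the alphabet; symmetrically for prefixes of ${}_m(b,a)$, and $\Delta$ itself arises as $\lambda_{v_0}^{-1}\lambda_{\Delta v_0}$ via a simplex spanned jointly with the terminal vertex of $\Pi_0$ (or, for $\cB$, along the polygonal path through the middle of $\Pi_0$). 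Taking inverses by swapping the roles of the two simplices in each pair gives inverse-closure; the symbol $1$ comes from $G_{v_0}$, which is trivial here, contributing just the identity.

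I expect the main obstacle to be the bookkeeping in step three: one must be careful that the disjoint-and-span condition defining $\cA$ does not secretly introduce a label that is a longer product than $\Delta$ or a non-Garside element — for instance a pair of simplices on opposite half-boundaries of $\Pi_0$ that span a triangle through an interior vertex. The resolution is that any such triangle (or higher simplex) of $\cX_{ab}$ has all its vertices within a single precell $g\Pi_0$ (flagness plus the explicit triangulation), so after translating by $\lambda^{-1}$ everything lands inside $\Pi_0$, whose vertices carry labels that are prefixes of $\Delta$, and differences of prefixes of $\Delta$ are again divisors of $\Delta$ by the lattice property of the Garside structure. A secondary subtlety is confirming $\cA = \cB$ in this case rather than merely both being Garside: this follows because every pair $(\sigma',\tau')$ of inclusion-related simplices along a polygonal path $\hat\gamma$ contributes a label that is a ``half'' of some $\mu_i \in \cA$, and in the dihedral-precell geometry these halves are themselves already prefixes of $\Delta$ or their inverses, so no new symbols appear — though I would double-check the $m=2$ and $m=3$ cases by hand against Sections~\ref{sec:dihartZ2} and~\ref{sec:dihartA2}, where $\cA = \cB = \{1,a,b,d,\ainv,\binv,\dinv\}$ and $\{1,a,b,c,d,e,\ainv,\binv,\cinv,\dinv,\einv\}$ respectively, matching exactly the inverse-closed Garside generators $\{1, a, b, ab\}$ and $\{1,a,b,ab,ba,aba\}$.
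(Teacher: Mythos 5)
Your overall architecture matches the paper's proof: exhibit directed geodesics and polygonal paths realizing every Garside generator, its inverse, and $1$ (your step four; the paper uses the length-$1$ geodesics to $a^{\pm 1}v_0$, $b^{\pm 1}v_0$, the geodesic through the interior of $\Pi_0$ to $\Delta^{\pm 1}v_0$, and the length-$2$ paths through an interior vertex joining boundary vertices at depths $i$ and $i-1$ on the two half-boundaries), and then verify conversely that every label $\lambda_\sigma^{-1}\lambda_\rho$ arising from a spanning or inclusion-related pair is of the required form. However, your resolution of what you correctly identify as the main obstacle -- the converse direction -- contains a genuine error. You claim that any simplex of $\cX_{ab}$ has all its vertices within a single precell, so that after translation everything lands in $\Pi_0$ and the label is a ``difference of prefixes of $\Delta$,'' which you assert is a divisor of $\Delta$ by the lattice property. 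Both halves of this are false. First, Lemma~\ref{lem:Xij_newedges}(i), which you yourself cite, says that for $m\geq 3$ the complex contains edges joining interior vertices of two \emph{distinct} overlapping precells; these simplices are not contained in any single precell, and they are exactly the pairs that must be analysed separately (the paper shows the overlap path is labelled by a Garside generator $g$ and the resulting label is $\Delta g^{-1}$ or its inverse, which happens to be Garside again). Second, differences of prefixes of $\Delta$ are \emph{not} in general Garside generators or inverses thereof: in $G(A_2)$ take the prefixes $ab$ of ${}_3(a,b)$ and $ba$ of ${}_3(b,a)$; then $(ab)^{-1}(ba)=ab^{-1}$, which is neither a Garside generator nor the inverse of one (indeed Section~\ref{sec:dihartA2} records that $ab^{-1}$ needs the two-letter word $\cinv d$ in $\ctL$).

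The reason no such bad label actually occurs is not a lattice property of the Garside structure but the combinatorial fact that vertices on opposite half-boundaries of a precell (other than the initial and terminal vertices) are never joined by an edge of $\cX_{ab}$ and hence never span a simplex. The converse therefore has to proceed by a case analysis on the types of the two witnessing vertices $v\in\sigma$, $w\in\rho$ with $\lambda_v=\lambda_\sigma$, $\lambda_w=\lambda_\rho$ (whose existence comes from condition (3) on $V_0$): if both are real they are adjacent in $\Cay$ and the label is $a^{\pm1}$ or $b^{\pm1}$; if one is real and one interior, the real one lies on the boundary of the unique precell containing the interior one, giving a Garside generator or its inverse (or $1$); if both are interior, either they share a precell (label $1$) or their precells overlap in a boundary path of length at least $2$, giving $\Delta g^{-1}$ or its inverse. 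Your proposal as written skips this case analysis and substitutes an argument that does not hold, so the converse direction is not established.
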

\begin{proof}

We represent the set of Garside generators of $G$ as 
the set of all alternating products of $a,b$ of length less than $m$, together
with the unique alternating product of length $m$ that begins with $a$, which 
we denote by $\Delta$ (see, e.g \cite{Charney2}). 
If $g$ is a Garside generator, then so are $g^{-1}\Delta$ and $\Delta g^{-1}$.

We start by observing a correspondence between the vertices other than $v_0$ on
the boundary of $\Pi_0$ and the Garside generators; for each
real vertex $v\neq v_0$ on the boundary of $\Pi_0$ the label $\lambda_v$ 
of $v$ (which maps $v_0$ to $v$)
is a  Garside generator, and for each Garside generator $g$, $gv_0$ is
on the boundary of $\Pi_0$.
The generator $\Delta$ is the label of the terminal vertex of $\Pi_0$.

Let $\ainv := a^{-1}$ and $\binv := b^{-1}$.
We see that $a,b,\ainv,\binv$ must all be in $\cA$, by considering the directed
godesics of length 1 from $v_0$ to each of
$av_0$, $bv_0$, $\ainv v_0$, $\binv v_0$, and by considering the related
polygonal path, we see that each of
$a,b,\ainv,\binv$ must also be in $\cB$.

We observe that a directed geodesic of length $m-1$ joins $v_0$ to $\Delta v_0$,
through the $m-2$ interior vertices of $\Pi_0$.
Since all interior vertices and edges of $\Pi_0$ are in $V_0$,
we see that the directed geodesic corresponds to
the word $1\cdots 1\Delta$ of length $m-1$ over $\cA$ in $\ctL$, and 
then that the polygonal path defined by that directed geodesic corresponds 
to the word $11 \cdots 11\Delta$ of length $2m-1$ over $\cB$ in $\cL'$. 
Similarly the word $\Delta^{-1}1\cdots 1$ of length $m-1$ over $\cA$ in $\ctL$
and the word $\Delta ^{-1}11\cdots 11$ of length $2m-1$ over $\cB$ in $\cL'$
label the directed geodesic and polygonal path from $v_0$ to $\Delta^{-1}v_0$. 
So we see that $1,\Delta,\Delta^{-1}$ are all in both $\cA$ and $\cB$.

Now let $v,w$ be vertices, one on each of the two half boundaries of $\Pi_0$, at distances $i,i-1$
from $v_0$, for some $1<i<m$. Then $\lambda_v,\lambda_w$ are Garside
generators, and in fact $\lambda_w$ is the maximal proper suffix of $\lambda_v$.
A path of length $2$ joins $v$ to $w$ in $\cX_1$, through an interior vertex 
$u$ of $\Pi_0$; both $w,u,v$ and $v,u,w$ are directed geodesics,
and their images under $\lambda_w^{-1}$ and $\lambda_v^{-1}$ are
directed geodesics from $v_0$ to $\lambda_w^{-1}v$ and $\lambda_v^{-1}w$
respectively, labelled by words $\lambda_w^{-1}\lambda_v$ and 
$\lambda_v^{-1}\lambda_w$ in $\ctL$.
We deduce that $\lambda_v$, $\lambda_v^{-1}$ (and also $\lambda_w$,
$\lambda_w^{-1}$) are in $\cA$, and similarly in $\cB$.

Conversely we need to verify that whenever $\sigma,\rho$ are consecutive
simplices in a
directed geodesic or polygonal path then $\lambda_{\sigma}^{-1}\lambda_{\rho}$ is
equal to either $1$, a Garside generator, or the inverse of such.
We have
$\sigma = \lambda_\sigma \bar{\sigma}$ and $\rho = \lambda_\rho \bar{\rho}$. Then 
it follows from condition (2) above on $V_0$ that
each of $\bar{\sigma}$, $\bar{\rho}$ must contain a vertex of $\Pi_0$.
It follows that $\sigma$ contains a vertex $v$ with $\lambda_v=\lambda_\sigma$, and $\rho$ a vertex $w$ with $\lambda_w=\lambda_\rho$.
Now, since $\sigma,\rho$ are consecutive vertices in either a directed
geodesic or a polygonal path, $v,w$ are vertices within a simplex of $\cX$
(within $\sigma*\rho$ in the first case, and one of $\sigma,\rho$ in the second).
If $v=w$, then $\lambda_\sigma=\lambda_v=\lambda_\rho$.
Otherwise, $\{v,w\}$ is an edge of $\cX_1$. If both $v$ and $w$ are real,
then they
are adjacent vertices of $\Cay$, and so $\lambda_\sigma^{-1}\lambda_\rho=\lambda_v^{-1}\lambda_w \in \{a,b,\ainv,\binv\}$.
If $v$ is real and $w$ is interior, then $v$ must be a vertex on the boundary
of the unique precell $\lambda_w\Pi_0$ that contains $w$, and then
$\lambda_v = \lambda_w g$, so $\lambda_\sigma^{-1}\lambda_\rho=g^{-1}$
where either $g=1$ or $g$ is a Garside generator.
If $v$ is interior and $w$ real, the same argument gives $\lambda_\sigma^{-1}\lambda_\rho=g$, with $g=1$ or $g$ a Garside generator.

Finally, if $v$ and $w$ are both interior, then the unique precells
$\lambda_v\Pi_0$ and $\lambda_w\Pi_0$ that contain them are either equal,
in which case $\lambda_v=\lambda_w$, or they have boundaries that
intersect in a path of length at least 2 that connects the initial vertex of one of
the two precells (labelled by $\lambda_v$ or $\lambda_w$) to the terminal vertex 
of the other (labelled by $\lambda_w\Delta$ or $\lambda_v\Delta$). The element
$g$ labelling such a path must be a Garside generator, and we
have either $\lambda_w\Delta=\lambda_vg$ or $\lambda_v\Delta=\lambda_wg$,
from which we see that $\lambda_\sigma^{-1}\lambda_\rho=
\lambda_v^{-1}\lambda_w$ is equal either to $\Delta g^{-1}$,
which is also a Garside generator, or to its inverse.
\end{proof}

We easily deduce  the following corollary from
Proposition~\ref{prop:dih_cA_Garside}.
\begin{corollary}
\label{cor:cA_Garside}
Let $G$ be an Artin group of
almost large type, and $\cX$ the systolic complex for $G$ constructed as in
\cite{HuangOsajda}. Select a vertex $v_0$ of $\cX$,
and suppose that, for each $i,j$,
$\Pi_0^{ij}$ is a precell within $\cX_{ij}$ with initial vertex $v_0$.
Suppose that $V_0$ is chosen such that orbit representatives for each parabolic subgroup
$G_{ij}$ of $G$ are based on $\Pi_0^{ij}$.
Then each of the sets $\cA$ and $\cB$ consists
of (a set representing) the union of complete sets of Garside generators for the subgroups $G_{ij}$, together with their inverses
and the identity element.
\end{corollary}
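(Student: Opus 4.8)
The plan is to deduce Corollary~\ref{cor:cA_Garside} from Proposition~\ref{prop:dih_cA_Garside} by a localisation argument. First I would recall that, by the construction in Section~\ref{sec:systolic_artin}, the systolic complex $\cX$ is built from $\Cay$ by replacing each translate of each subgraph $\Cay_{ij}$ (with $m_{ij}<\infty$) by a copy of $\cX_{ij}$, leaving the real vertices and edges of $\Cay$ intact. Consequently every simplex $\sigma$ of $\cX$ other than a real vertex or a real edge lies inside a unique copy of some $\cX_{ij}$, and two simplices $\rho,\sigma$ related by inclusion or reverse inclusion, or spanning a common simplex, necessarily lie in a common copy of some $\cX_{ij}$ (a real edge may lie in several such copies, but that causes no trouble since a real edge is just a pair of real vertices joined in $\Cay$). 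Hence the orbits $E(K)$ of $G$ on edges of $\cS(\cX)$, and the pairs of disjoint simplices spanning a simplex, all live inside copies of the $\cX_{ij}$.

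Next I would use the hypothesis that $V_0$ has been chosen so that, for each $i,j$, the representatives of the $G_{ij}$-orbits of simplices inside $\cX_{ij}$ are based on $\Pi_0^{ij}$. The point here is that the $G$-orbit of a simplex $\sigma$ inside a copy $g\cX_{ij}$ is determined by its $G_{ij}$-orbit inside $\cX_{ij}$ together with the data $(i,j)$, so a choice of $V_0$ respecting the $G_{ij}$-orbit structure as required makes sense; I would spend a sentence checking that such a $V_0$ exists and that its members then satisfy conditions (1)--(3) of ``based on $\Pi_0^{ij}$'' relative to each $\cX_{ij}$. With this set-up, for a pair $(\sigma,\tau)$ of simplices of $\cX$ that are related by inclusion/reverse inclusion or span a common simplex, both lie in a common copy $g\cX_{ij}$, and the label $\lambda_{\sigma,\tau}=\lambda_\sigma^{-1}\lambda_\tau$ is, by $G$-equivariance ($\Lambda_{g\sigma,g\tau}=\Lambda_{\sigma,\tau}$), equal to the corresponding label computed for the $G_{ij}$-action on $\cX_{ij}$. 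Proposition~\ref{prop:dih_cA_Garside} then says exactly that this label is $1$, a Garside generator of $G_{ij}$, or the inverse of one.

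Finally I would assemble the two inclusions. For ``$\subseteq$'': every generator of $\cA$ (resp.\ $\cB$) is by definition some $\lambda_{\rho,\tau}$ for $\rho,\tau$ disjoint spanning a simplex (resp.\ related by inclusion), or the single symbol for $1$ (recall $G_{v_0}=1$ here); by the previous paragraph each such is $1$, a Garside generator of some $G_{ij}$, or its inverse. For ``$\supseteq$'': conversely, for each $i,j$, Proposition~\ref{prop:dih_cA_Garside} applied inside $\cX_{ij}$ (with $\Pi_0^{ij}$ in the role of $\Pi_0$) exhibits, for every Garside generator $g$ of $G_{ij}$, consecutive simplices $\rho,\tau$ of a directed geodesic or polygonal path inside $\cX_{ij}$ with $\lambda_{\rho,\tau}=g$ and another pair realising $g^{-1}$; these simplices are disjoint and span a simplex (resp.\ are related by inclusion), so $g,g^{-1}\in\cA$ (resp.\ $\in\cB$), and the symbol $1$ is in both alphabets by construction. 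Taking the union over all $i\ne j$ with $m_{ij}<\infty$ gives the claim.

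The main obstacle I anticipate is purely bookkeeping rather than conceptual: one must check carefully that a single global choice of $V_0$ can simultaneously be ``based on $\Pi_0^{ij}$'' for every $i,j$ in the sense needed by Proposition~\ref{prop:dih_cA_Garside}, given that the various $\cX_{ij}$ overlap along real vertices and edges of $\Cay$, and that the $G$-orbit of a simplex in $g\cX_{ij}$ really does reduce to a $G_{ij}$-orbit computation. Once the locality statement (every relevant pair of simplices sits in a common $\cX_{ij}$) is pinned down, the corollary follows by applying the proposition copy-by-copy, which is why the authors call it an easy deduction.
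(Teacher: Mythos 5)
The paper gives no explicit proof of this corollary, stating only that it is "easily deduced" from Proposition~\ref{prop:dih_cA_Garside}, and your localisation argument is precisely the intended deduction: every pair of simplices that is related by inclusion or spans a common simplex lies (up to the trivial case of real vertices and edges) in a single translate of some $\cX_{ij}$, so the proposition applies copy-by-copy. Your write-up is correct and, if anything, more careful than the paper about the one point that needs checking, namely that a global $V_0$ compatible with all the $\Pi_0^{ij}$ exists and that $G$-orbits of simplices inside a copy of $\cX_{ij}$ reduce to $G_{ij}$-orbits.
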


We now consider properties of the language $\ctL$ over $\cA$.

We can find upper and lower bounds on the lengths of words in $\ctL$. The upper 
bounds are general for a systolic group with trivial vertex stablisers, but the 
lower bound is specific to systolic Artin groups.
\begin{proposition}
\label{prop:wordlength}
Let $G=\langle X \rangle$ be systolic, and $g \in G$,
and let $\cL$ be the language of a biautomatic structure over $\cA$.
\begin{mylist}
\item[(i)] If $w' \in \cL$ represents $g$, then  $|w'|_\cA \leq |g|_X + 1$.
\item[(ii)] If $G$  has trivial vertex stabilisers and $w \in \ctL$ represents
$g$, then $|w|_\cA \leq |g|_X$.
\item[(iii)] If $G$ is a non-free Artin group of almost large type and $w \in \ctL$
represents $g$, then
$|g|_X \leq \max\{2,M-2\}|w|_\cA$,
where $M$ is the maximum of those $m_{ij}$ that are finite.
\end{mylist}
\end{proposition}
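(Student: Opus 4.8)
The plan is to derive all three bounds from a single observation: a word $w'\in\cL$ representing $g$ is built from the unique directed geodesic $\gamma(v_0,gv_0)=\sigma_0=v_0,\sigma_1,\ldots,\sigma_n=gv_0$, in the form $w'=h_0\mu_1\cdots\mu_n$, so that $|w'|_\cA=n+1$; and by \cite[Corollary 9.8]{JS} this directed geodesic is a geodesic of the $1$-skeleton, so $n=d_{\cX_1}(v_0,gv_0)$. When $G$ has trivial vertex stabilisers, $\Lambda_{v_0}$ is trivial, so $h_0=1$, and the corresponding word $w\in\ctL$ is $\mu_1\cdots\mu_n$, with $|w|_\cA=n=d_{\cX_1}(v_0,gv_0)$.

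For part~(i), I would use that for each $x\in X$ the vertices $v_0$ and $xv_0$ of $\cX_1$ coincide or are joined by an edge --- this is how $\cX$ sits over $\Cay(G,X)$ --- so that a geodesic $X$-word for $g$ traces a path of length $|g|_X$ in $\cX_1$ from $v_0$ to $gv_0$; hence $n=d_{\cX_1}(v_0,gv_0)\le|g|_X$ and $|w'|_\cA=n+1\le|g|_X+1$. Part~(ii) is then immediate: under the trivial-stabiliser hypothesis $|w|_\cA=n=d_{\cX_1}(v_0,gv_0)\le|g|_X$.

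Part~(iii) is essentially a restatement of Lemma~\ref{lem:pathlength}. Here $v_0$ is the identity vertex of the embedded $\Cay$; since the action of $G$ on $\cX$ extends that on $\Cay$, it preserves the set of real vertices, so $gv_0$ is again a real vertex, and by the first paragraph $d_{\cX_1}(v_0,gv_0)=n=|w|_\cA$. Lemma~\ref{lem:pathlength}, which applies because $G$ is non-free, then gives $|g|_X=d_{\Cay}(v_0,gv_0)\le n\max\{2,M-2\}=\max\{2,M-2\}\,|w|_\cA$, as required.

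I expect no real obstacle here: parts~(i) and~(ii) are soft consequences of \cite[Corollary 9.8]{JS} together with the embedding of $\Cay(G,X)$ in $\cX$, while the content of part~(iii) has already been extracted in Lemma~\ref{lem:pathlength}. The only things that need care are the length bookkeeping --- words in $\cL$ carry a leading symbol, which accounts for the extra $1$ in~(i), and a directed geodesic on $n+1$ simplices is a path of length $n$ --- and the verification that $v_0$ and $gv_0$ are real vertices, so that Lemma~\ref{lem:pathlength} is genuinely applicable in~(iii).
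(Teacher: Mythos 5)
Your proposal is correct and follows essentially the same route as the paper: identify $|w'|_\cA$ with $n+1$ where $n$ is the length of the directed geodesic, use the fact that this equals the $\cX_1$-distance (the paper passes through an allowable geodesic and \cite[Fact 11.1]{JS} where you invoke \cite[Corollary 9.8]{JS} directly, but these are interchangeable here) together with the embedding of $\Cay$ in $\cX_1$ for (i) and (ii), and quote Lemma~\ref{lem:pathlength} for (iii). Your explicit check that $v_0$ and $gv_0$ are real vertices, so that Lemma~\ref{lem:pathlength} applies, is a point the paper leaves implicit.
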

\begin{proof}
For (i) and (ii), we note that if
$\sigma_0,\ldots,\sigma_n$ is the directed geodesic that corresponds to $w' \in \cL$, then $|w'|=n+1$.
But now if
$\gamma= v_0,\ldots,v_n$ is a corresponding allowable geodesic (so that 
$v_i \in \sigma_i$), then
it follows from \cite[Fact 11.1]{JS} that $\gamma$ is a geodesic in the 1-skeleton of $\cX$. So, since the Cayley graph $\Cay$ embeds naturally within $\cX_1$,
we have $n \leq |g|_X$, and hence (i) follows.
Now if $w \in \ctL$ representing $g$, then the word $1w$ represents $g$ in
$\cL$, and so (ii) is an immediate consequence of (i).
Finally, (iii) follows by Lemma~\ref{lem:pathlength}
\end{proof}

\begin{proposition}
\label{prop:extend_dirgeo}
If $\sigma_0,\ldots,\sigma_n$ is a directed geodesic,
then there is a directed geodesic $\sigma_0,\ldots,\sigma_{n-1},v_n$,
for which $v_n$ is a vertex. Furthermore, if
$v_n$ is an interior vertex, then there is a directed geodesic
\[\sigma_0,\ldots,\sigma_{n-1},v_n,v_{n+1},\ldots,v_{n+k},\] with
$v_{n+k}$ real and $k\leq M-2$, where $M$ is the maximum of those $m_{ij}$ that are finite.
\end{proposition}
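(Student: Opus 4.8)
The plan is to handle the two assertions separately. The first is immediate from Lemma~\ref{lem:subpath_dir_geo}(2): take $v_n$ to be any vertex of $\sigma_n$, and $\sigma_0,\ldots,\sigma_{n-1},v_n$ is a directed geodesic. (In the situation of interest $\sigma_0=v_0$, so this just replaces $\gamma(v_0,gv_0)$ by a directed geodesic ending at a genuine vertex.) Nothing more is needed for this part.

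For the second assertion, suppose $v_n$ is interior. Interior vertices occur only inside triangulated precells, so $v_n=w_s$ for some precell $\Pi$ of a subcomplex $\cX_{ij}$ with $m:=m_{ij}$ finite and $m\ge 3$ (so $M-2\ge 1$), where $w_1,\ldots,w_{m-2}$ is the \emph{spine} of interior vertices of $\Pi$. I would use the explicit combinatorics of the triangulation from \cite[Section~3]{HuangOsajda} (cf.\ Figure~\ref{fig:triangulation}): consecutive spine vertices are joined by an edge; $w_1$ and $w_{m-2}$ are joined to the initial and terminal vertices of $\Pi$ respectively; each $w_s$ is joined to real vertices on both half-boundaries; and, modulo the extra edges mentioned below, the link $\cX_{w_s}$ is a hexagon whose two \emph{spine corners}, namely $w_{s-1}$ and $w_{s+1}$ (with $w_0,w_{m-1}$ denoting the initial and terminal vertices of $\Pi$), are opposite one another, the other four corners being real. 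Since the defining conditions of a directed geodesic involve only consecutive triples of simplices (see the proof of Lemma~\ref{lem:subpath_dir_geo}), extending $\sigma_0,\ldots,\sigma_{n-1},v_n$ depends only on $\rho:=\sigma_{n-1}$, its position in $\cX_{v_n}$, and the subsequent links. Running the case analysis on $\rho$ already carried out for $\Z^2$ in Section~\ref{sec:dihartZ2} (the classification of allowed triples in a hexagonal link): whenever $\rho$ is an edge, or a vertex whose opposite corner is real, some real vertex $u$ adjacent to $v_n$ satisfies $\Res(\rho,\cX_{v_n})\cap B_1(u,\cX_{v_n})=\emptyset$, so $v_{n+1}=u$ works and $k=1$; and whenever $\rho$ is a vertex whose opposite corner is an interior spine vertex, condition~(2) forces the next vertex to be that spine vertex, and the same happens at each subsequent step, so the directed geodesic is compelled to march along the spine \emph{away from $\rho$} until it reaches the initial or terminal vertex of $\Pi$, which is real. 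From $w_s$ this march takes at most $m-2$ steps, giving $k\le m-2\le M-2$ as required (with equality possible, e.g.\ when $\rho$ is the initial vertex of a precell with $m_{ij}=M$).

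The hard part will be justifying the clause "modulo the extra edges mentioned below": repairing the bad links at real vertices (Section~\ref{sec:subcomplexes_Xij}, Lemma~\ref{lem:Xij_newedges}) adds edges between interior vertices of distinct overlapping precells, which in principle could enlarge $\cX_{w_s}$ beyond the boundary hexagon of $\Pi$ and interfere with the residue/ball computations driving the forced march. To deal with this one needs to extract from \cite{HuangOsajda} that those edges contribute only short cliques of consecutive spine vertices of neighbouring precells, situated near the relevant overlap (this is where Lemma~\ref{lem:Xij_newedges}(ii) enters), and to check that they can be avoided by always marching towards whichever end of $\Pi$ is free of such an overlap. Once this local verification is in place, the argument of the previous paragraph delivers the stated bound.
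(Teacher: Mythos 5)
Your overall route is the same as the paper's: part one is exactly Lemma~\ref{lem:subpath_dir_geo}(2), and part two proceeds by analysing the hexagonal link of the interior vertex $v_n$ inside its precell $\Pi$ and either stopping at a real corner in one step or marching along the spine of interior vertices to the (real) initial or terminal vertex of $\Pi$, giving $k\le m_{ij}-2\le M-2$. Your case analysis inside the hexagon is correct. The problem is that the step you defer as ``the hard part'' is precisely the substance of the proof, and your sketch of how to close it does not match what is actually needed. The issue is not that the extra edges enlarge the link of $v_n$ per se, but that $\sigma_{n-1}$ itself may fail to lie in $\Pi$: it may contain an interior vertex $u'$ of an overlapping precell $\Pi'$. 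In that case one cannot even start your case analysis, because $\Res(\sigma_{n-1},\cX_{v_n})$ is not the residue of a vertex or edge of the hexagon. The paper's resolution is to work throughout with $\pi:=\Res(\sigma_{n-1},\cX_{v_n})\cap(\text{hexagon})$ and to prove that $\pi$ is always either a single closed edge or a path of two closed edges: when $\sigma_{n-1}\subset\Pi$ this is immediate, and when $\sigma_{n-1}$ contains such a $u'$, every vertex of $\pi$ is adjacent to $u'$, so Lemma~\ref{lem:Xij_newedges}(ii) forces any two interior vertices of $\Pi$ in $\pi$ to be adjacent within $\Pi$, which pins $\pi$ down to a closed edge. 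After that, both cases feed into the same hexagon analysis you describe. So the missing ingredient is a concrete structural claim about $\pi$, proved from Lemma~\ref{lem:Xij_newedges}, not merely an avoidance strategy.

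Two further corrections to your plan. First, ``marching towards whichever end of $\Pi$ is free of such an overlap'' is not an available choice: as your own hexagon analysis shows, when $\pi$ consists of the two closed edges through one spine neighbour $v'$, the only admissible next vertex among the hexagon corners is the opposite spine neighbour $v''$, so the direction of the march is dictated by where $\pi$ sits, not by where overlaps are. Second, the overlap difficulty does not recur at later steps of the march and so needs no further avoidance: once $v_{n+1}$ is a spine vertex of $\Pi$, the preceding simplex at every subsequent step is itself a single interior vertex of $\Pi$, and the same argument applies directly until the real endpoint $w$ of the spine is reached.
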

\begin{proof}
The existence of a vertex $v_n$ such that 
$\sigma_0,\ldots,\sigma_{n-1},v_n$ is a directed geodesic
follows directly from Lemma~\ref{lem:subpath_dir_geo}\,(2);
then $\sigma_{n-1}$ is within the link $\cX_{v_n}$ of $v_n$.
If $v_n$ is not real, then it is an interior vertex within a precell $\Pi$
of some $\cX_{ij}$ subcomplex.
By Lemma~\ref{lem:Xij_newedges} the link of $v_n$ is also within that
subcomplex: it intersects
the precell in a hexagon, as shown in Figure~\ref{fig:lnk_in_precell}.
Let $u,w,v_n, v',v''$ be the vertices marked in that figure.

Let $\pi$ be the intersection of
$\Res(\sigma_{n-1},\cX_{v_n})$ with that hexagon.
We claim that $\pi$ is either 
a closed path of two consecutive edges and their vertices, or
is a single closed edge.
That is clear if $\sigma_{n-1}$ is within $\Pi$, in which case $\sigma_{n-1}$ is
either a vertex of edge of the hexagon.
Otherwise, $\sigma_{n-1}$ contains a vertex $u'$ outside $\Pi$ that is joined 
by an edge to $v_n$. Then, by Lemma~\ref{lem:Xij_newedges},
$u'$  must be an interior vertex in a precell $\Pi'$
that intersects $\Pi$ along a path on one of its half boundaries.
Then every vertex of $\pi$ is joined to $u'$ by an edge, and so in this case our
claim follows by Lemma~\ref{lem:Xij_newedges}. 

Now, except when $\pi$ consists of the two closed edges through either $v'$
or $v''$, at least one of the four real vertices of the hexagon is distance
at least two from $\pi$, and so its 1-ball is disjoint from $\pi$. In that
case, we can choose $v_{n+1}$ to be such a vertex, and
$\sigma_0,\ldots,\sigma_{n-1},v_n, v_{n+1}$ is a directed geodesic
with $v_{n+1}$ real.

Otherwise, without loss of generality, $\pi$ consists of the two closed edges
through $v'$. Then the 1-ball about $v''$ is disjoint from $\pi$,
and we can choose $v_{n+1}$ to be $v''$.  Now
$\sigma_0,\ldots,\sigma_{n-1},v_n, v_{n+1}$ is a directed geodesic,
but $v_{n+1}$ is interior. 
But now, by the same argument as above,
we can extend the directed geodesic with vertices $v_{n+2},\ldots,v_{n+k}=w$ 
as successive vertices on the path within the triangulated precell from $v''$ to $w$.
\end{proof}

\begin{figure}
\setlength{\unitlength}{0.8pt}
\begin{picture}(300,60)(-60,-30)
\put(0,0){\circle*{3}} \put(-10,2){$u$}
\put(240,0){\circle*{3}} \put(242,2){$w$}

\put(40,0){\circle*{3}} 
\put(45,0){\circle*{1}} \put(50,0){\circle*{1}} \put(55,0){\circle*{1}} 
\put(60,0){\circle*{1}} \put(65,0){\circle*{1}} \put(70,0){\circle*{1}}
\put(75,0){\circle*{1}}
\put(80,0){\circle*{3}} \put(72,2){$v'$}
\put(120,0){\circle*{3}} \put(127,4){$v_n$} 
\put(160,0){\circle*{3}} \put(162,2){$v''$}
\put(165,0){\circle*{1}} \put(170,0){\circle*{1}} \put(175,0){\circle*{1}} 
\put(180,0){\circle*{1}} \put(185,0){\circle*{1}} \put(190,0){\circle*{1}} 
\put(195,0){\circle*{1}}
\put(200,0){\circle*{3}} 

\put(0,0){\line(1,0){40}} 
\put(80,0){\line(1,0){40}} \put(120,0){\line(1,0){40}} 
\put(200,0){\line(1,0){40}}

\put(20,30){\circle*{3}} \put(60,30){\circle*{3}}
\put(65,30){\circle*{1}} \put(70,30){\circle*{1}}  \put(75,30){\circle*{1}} 
\put(80,30){\circle*{1}} \put(85,30){\circle*{1}}  \put(90,30){\circle*{1}}  
\put(95,30){\circle*{1}}   
\put(100,30){\circle*{3}}  \put(140,30){\circle*{3}} 
\put(145,30){\circle*{1}}
\put(150,30){\circle*{1}} \put(155,30){\circle*{1}} \put(160,30){\circle*{1}} 
\put(165,30){\circle*{1}} \put(170,30){\circle*{1}} \put(175,30){\circle*{1}} 
\put(180,30){\circle*{3}} \put(220,30){\circle*{3}}

\put(0,0){\line(2,3){20}} 
\put(20,30){\line(1,0){40}} 
\put(100,30){\line(1,0){40}} \put(180,30){\line(1,0){40}}
\put(240,0){\line(-2,3){20}} 

\put(40,0){\line(-2,3){20}} \put(40,0){\line(2,3){20}}
\put(80,0){\line(2,3){20}}
\put(120,0){\line(-2,3){20}} \put(120,0){\line(2,3){20}}
\put(160,0){\line(-2,3){20}}
\put(200,0){\line(-2,3){20}} \put(200,0){\line(2,3){20}}

\put(20,-30){\circle*{3}} \put(60,-30){\circle*{3}} 
\put(65,-30){\circle*{1}} \put(70,-30){\circle*{1}} \put(75,-30){\circle*{1}}
\put(80,-30){\circle*{1}} \put(85,-30){\circle*{1}} \put(90,-30){\circle*{1}}
\put(95,-30){\circle*{1}}
\put(100,-30){\circle*{3}} \put(140,-30){\circle*{3}} \put(145,-30){\circle*{1}} 
\put(150,-30){\circle*{1}} \put(155,-30){\circle*{1}} \put(160,-30){\circle*{1}} 
\put(165,-30){\circle*{1}} \put(170,-30){\circle*{1}} \put(175,-30){\circle*{1}}   
\put(180,-30){\circle*{3}} \put(220,-30){\circle*{3}}

\put(0,0){\line(2,-3){20}} 
\put(20,-30){\line(1,0){40}} 
\put(100,-30){\line(1,0){40}} 
\put(180,-30){\line(1,0){40}}
\put(240,0){\line(-2,-3){20}} 

\put(40,0){\line(-2,-3){20}} \put(40,0){\line(2,-3){20}}
 \put(80,0){\line(2,-3){20}}
\put(120,0){\line(-2,-3){20}} \put(120,0){\line(2,-3){20}}
\put(160,0){\line(-2,-3){20}} 
\put(200,0){\line(-2,-3){20}} \put(200,0){\line(2,-3){20}}
\end{picture}
\caption{Extending a directed geodesic past $v_n$}
\label{fig:lnk_in_precell}
\end{figure}
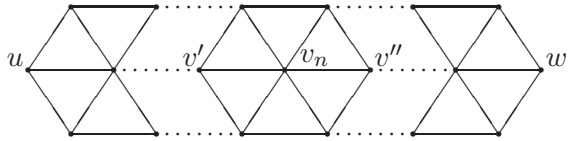

We note that, since the reverse of a directed geodesic is not in general a directed geodesic, we should not expect $\ctL$ to be symmetric. 
And although every prefix of a directed geodesic is also a directed geodesic,
its final simplex is not in general a vertex, and so we do not expect (or observe, in our examples) prefix closure. However, we have the following result.

\begin{corollary}
\label{cor:prefix}
Suppose that $w \in \ctL$, and that $w_1$ is an prefix of $w$,
and let $w_2$ be the maximal prefix of $w_1$ (formed by deleting its last letter).
Then there exists a word $w' \in \ctL$,
that has $w_2$ as a prefix, with 
$|w'|-|w_2|\leq \max_{i,j}(\{m_{ij} -1 : m_{ij} \ne \infty \})$.
\end{corollary}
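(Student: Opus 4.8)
The plan is to unwind the relationship between words in $\ctL$ and directed geodesics, and then invoke Proposition~\ref{prop:extend_dirgeo} to repair the ``broken'' prefix. Let $w = h_0\mu_1\cdots\mu_n \in \ctL$ correspond (via the construction of Section~\ref{sec:biautA}) to the directed geodesic $\gamma = \sigma_0 = v_0, \sigma_1, \ldots, \sigma_n = gv_0$. If $w_1$ is a prefix of $w$ of length $\ell$, it reads off the initial segment $\sigma_0, \ldots, \sigma_{\ell-1}$ of $\gamma$ (recall the first letter is suppressed since $G_{v_0}$ is trivial), and $w_2$, obtained by deleting the last letter of $w_1$, corresponds to $\sigma_0, \ldots, \sigma_{\ell-2}$. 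By Lemma~\ref{lem:subpath_dir_geo}\,(1), $\sigma_0, \ldots, \sigma_{\ell-1}$ is itself a directed geodesic; write $m := \ell-1$ and rename it $\sigma_0, \ldots, \sigma_m$.

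Now I would apply Proposition~\ref{prop:extend_dirgeo} to this directed geodesic: it produces a directed geodesic $\sigma_0, \ldots, \sigma_{m-1}, v_m$ with $v_m$ a vertex, and moreover, if $v_m$ happens to be an interior vertex, it can be extended to a directed geodesic $\sigma_0, \ldots, \sigma_{m-1}, v_m, v_{m+1}, \ldots, v_{m+k}$ ending at a \emph{real} vertex with $k \leq M-2$. I should be a little careful here: I want a directed geodesic ending at $v_0$-orbit vertices of $\cX$ whose associated $\ctL$-word extends $w_2$, and $\ctL$ is defined via directed geodesics of the form $\gamma(v_0, h v_0)$ (from $v_0$ to a \emph{real} vertex). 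Since $v_0$ is real and, by \cite[Proposition 9.6]{JS}, every directed geodesic is a projection ray on its final simplex (and, by uniqueness \cite[Lemma 9.7]{JS}, the projection ray from $v_0$ to its final vertex), the directed geodesic $\sigma_0, \ldots, \sigma_{m-1}, v_m, v_{m+1}, \ldots, v_{m+k}$ ending at a real vertex $v_{m+k} = h v_0$ is exactly $\gamma(v_0, hv_0)$, hence gives rise to a word $w' \in \ctL$. By construction $w'$ agrees with $w_2$ on its first $m-1$ letters (the simplices $\sigma_0, \ldots, \sigma_{m-1}$ and the orbit representatives are unchanged), so $w_2$ is a prefix of $w'$.

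Finally I would count. The directed geodesic underlying $w'$ has one more simplex than it has letters after the suppressed leading symbol, so $|w'| = (m-1) + 1 + (\text{number of extension steps past } \sigma_{m-1})$. In the worst case $\sigma_{m-1}$ is linked to an interior vertex $v_m$ and we extend through $v_m, v_{m+1}, \ldots, v_{m+k}$ with $k \leq M-2$; this contributes the letters corresponding to $v_m$ through $v_{m+k}$, namely $k+1 \leq M-1$ letters, whereas $|w_2| = m-1$. Hence $|w'| - |w_2| \leq M - 1 = \max_{i,j}\{m_{ij}-1 : m_{ij} \neq \infty\}$, as claimed. The case where $v_m$ is already real (or $\sigma_{m-1}$ is itself a vertex, so no extension is needed) only makes the bound smaller. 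The main obstacle I anticipate is the bookkeeping in this last step: pinning down precisely how many $\cA$-letters the extension from $\sigma_{m-1}$ to a real vertex contributes, and confirming that the ``$+1$'' from re-inserting a vertex $v_m$ in place of the (possibly higher-dimensional) simplex $\sigma_m$ is correctly absorbed into the bound $M-1$ rather than $M-2$ — i.e. that the replacement of $\sigma_m$ by $v_m$ costs at most one extra letter and the further real-vertex extension costs at most $M-2$ more, for a combined worst case of $M-1$.
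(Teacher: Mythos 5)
Your proof is correct and follows exactly the paper's (much terser) argument: pass to the directed geodesic underlying $w_1$, apply Proposition~\ref{prop:extend_dirgeo} to replace its last simplex by a vertex and extend to a real vertex in at most $M-2$ further steps, and count $k+1\leq M-1$ extra letters; your extra care about uniqueness of directed geodesics (so the extension really yields a word of $\ctL$) is a point the paper leaves implicit. The only blemish is a harmless off-by-one in your indexing of which simplices a prefix of length $\ell$ traverses (it should be $\sigma_0,\ldots,\sigma_\ell$), which cancels out and does not affect the final bound.
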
 
\begin{proof} 
We let $\sigma_0,\ldots,\sigma_n$ be the directed geodesic corresponding to
$w'$, and choose $w' \in \ctL$ to correspond to the directed
geodesic $\sigma_0,\ldots,\sigma_{n-1},v_n,\ldots,v_{n+k}$ that is constructed 
in Proposition~\ref{prop:extend_dirgeo}.
\end{proof}

\section{Computer code}
\label{sec:computer_code}
We have written computer code that constructs the biautomatic structures
described in \cite{HuangOsajda} for certain Artin groups $G$ of almost large
type. Currently our programs can handle such groups provided that all of the
entries $m_{ij},\,i \neq j$ in the associated Coxeter matrix lie in
$\{2,3,4,\infty\}$.
This code is written in $\GAP$ \cite{GAP} and makes use of the $\KBMAG$
\cite{KBMAG} package for computing shortlex automatic structures and
performing operations on finite state automata.

The biautomatic structures that we construct are over the generating set
$\cA$ that was described in Section \ref{sec:gensets} above for general
systolic groups.
With this code we can explictly construct the word acceptor and
right and left multiplier automata and verify computationally that they do
indeed define a biautomatic stucture. We can also reduce words over the
standard generating set $X = \{a_1, \ldots,a_n\}$ of $G$ to their equivalent
normal form words over $\cA$.

As we saw in Corollary \ref{cor:cA_Garside}, with appropriate choice of
the set $V_0$ of orbit represenatives,
the set $\cA$ is equal to (symbols representing) the set
$\{1\} \cup Y \cup Y^{-1}$, where $Y$ is the union
of $X$ and, for each $i,j$ with $1 \le i < j \le n$, the sets
corresponding to $\{a_ia_j\}$, $\{a_ia_j, a_ja_i, a_ia_ja_i\}$, and
$\{a_ia_j, a_ja_i, a_ia_ja_i, a_ja_ia_j, a_ia_ja_ia_j \}$,
when $m_{ij} =2,3$ and $4$, respectively.
In fact we only construct the left and right multiplier automata for
the generators in $\{ 1 \} \cup X \cup X^{-1}$, but this is sufficient
to verify the correctness of the structures, and the multipliers for the
additional generators could be constructed as composite automata
(defined in \cite[Section 5]{EHR}) if required.

The code works as follows. The authors proved in \cite{SLartin} and
\cite{SLartin2} that $G$ is shortlex automatic over $X$, and we start
by using $\KBMAG$ to construct this shortlex automatic structure.
Then we enumerate words in the accepted language up to some chosen length
$l$ (it turns out that $l=7$ is sufficient in general, or $l=4$ if there
there are no $i,j$ with $m_{ij}=4$).
So we have effectively constructed the ball of radius $l$ about the base point
in the Cayley graph $\Cay$ of $G$ over $X$. Using this, we construct the simplices in
part of the systolic complex $\cX$, where $v_0$ is the base point of $\Cay$.

Now, provided that $l$ is large enough, this part of the complex
contains enough information to enable us to construct the automaton,
as described in Section~\ref{sec:construct2}; since vertex stabilisers are trivial,
we construct the automaton $\ctM$ that accepts $\ctL$.
To do this, we first have to choose orbit representatives $V_0$ for the action 
of $G$ on $\cX$, which must satisfy the hypotheses of 
Corollary \ref{cor:cA_Garside}.
In general, the representatives are chosen to be close in $\cX$ to $v_0$.

The automaton $\ctM$ is in general non-deterministic, but we
can use the automata manipulation functions in $\KBMAG$ to construct
a deterministic automaton with a minimal number of states that accepts the same
language.

We next describe how we construct the multiplier automata.
In general, let $W$ be a finite subset of a group $G$, and let $u$ and $v$ be
words over some  generating set $X$ of $G$. For
$0 \le i \le \ell(u)$, let $u(i)$ denote the prefix of $u$ of length $i$,
and put $u(i)=u$ for $i \ge \ell(u)$.  Then, for $a \in G$, we write
$u \sim_{W,a} v$ if $u(i)^{-1}av(i) \in W$ for all $i \ge 0$.

For the language $L$ of an automatic structure of $G$ over $X$, there is a
finite set $W_R \subset G$ such that, for all $a \in X \cup X^{-1}$,
the language $\{(u,v) : u,v \in L,\,ua =_G v,\, u \sim_{W_R,1} v \}$ is regular
and has both of its components projecting onto $L$.
These are the languages of the right multipliers in the automatic structure.
An automaton accepting this language is described in \cite[Section 6.3]{ECHLPT}.

If in addition $L$ is the language of a biautomatic structure for $G$,
then there is a finite set $W_L \subset G$ such that, for all
$a \in X \cup X^{-1}$,
the language $\{(u,v) : u,v \in L,\,u =_G av,\, u \sim_{W_L,a} v \}$ is
regular and has both of its components projecting onto $L$.
These are the languages of the left multipliers in the biautomatic structure.

If we are given the word acceptor of an automatic structure and a candidate
for the set $W_R$, then we can construct the associated right multiplier
automata and then use the {\em axiom checking process} described in
\cite[Section 5.1]{ECHLPT} to verify that this really is an automatic structure
and hence that $W_R$ has the required properties. If the axiom checking
process fails, then we can attempt to find specific instances of failure
and thereby add further words to $W_R$, after which we try again.
This is essentially the same method that is described in more detail in
\cite{EHR} for constructing shortlex automatic structures.
As mentioned earlier, in our code we just construct the automata for
generators in $\{1\} \cup X \cup X^{-1}$, which is sufficient for the
axiom checking.

Furthermore, if we are given a candidate for $W_L$, then we can construct the
associated left multipliers. We can use these to check the condition
\[ \forall u \in L\, \exists v \in L\, (u=_G av,\,u \sim_{W_L,a} v) \]
for each $a \in X$. If the condition holds, then we have verified that
the structure is indeed biautomatic. If not, then we can find
specific words $u \in L$ for which the condition fails, compute words
$v \in L$ with $u=_G av$, and then add new elements to $W_L$ to make
the condition $u \sim_{W_L,a} v$ hold for these words.

Note that to do all of this in our specific situation for Artin groups (in
which the accepted language of the word acceptor is $\ctL$), we need to be able
to test words over $\cA$ for equality in $G$ (i.e. to solve the word problem in
$G$), but we can use the shortlex automatic structure to do that. We also need
to be able to reduce words to normal form; that is, given a word $u$ find a
word $v$ with $v \in \ctL$ such that $u =_G v$. A method for doing this that
uses the set $W_R$ is described in the proof of Theorem 4.1 of \cite{EHR}.

In practice it turned out that $W_R = \cA$ was adequate for the construction
of the right multipliers for the generators in  $\{1\} \cup X \cup X^{-1}$,
and also for word reduction.
But the set $W_L$ needs to be significantly larger, and we were able to
construct that using the error correcting process described above.


The principal reason that we have not attempted to handle Artin groups with
$m_{ij} > 4$ is that the complex $\cX$ becomes increasingly more difficult
to describe explicitly with increasing $m_{ij}$. We found that, for
$m_{ij}=2$, 3 and 4, the highest dimension of a simplex in the complex
is $m_{ij}$ and it seems plausible that the same is true for higher $m_{ij}$.

\end{document}